\numberwithin{equation}{section}
\newtheorem{thm}{Theorem}[section]
\newtheorem{lem}[thm]{Lemma}
\newtheorem{prop}[thm]{Proposition}
\newtheorem{defn}[thm]{Definition}
\newtheorem{con}[thm]{Conjecture}
\theoremstyle{definition}
\newtheorem{rem}[thm]{Remark}
\theoremstyle{remark}
\newcommand{\ds}{\displaystyle}
\newcommand{\R}{\mathbb{R}}
\newcommand{\de}{\partial}
\date{}
\title{A reverse quantitative isoperimetric type  inequality for the Dirichlet Laplacian}
\author[  G. Paoli]{
 Gloria Paoli}
\address{Dipartimento di Matematica e Applicazioni ``R. Caccioppoli'', Universit\`a degli studi di Napoli Federico II \\ Via Cintia, Complesso Universitario Monte S. Angelo, 80126 Napoli, Italy.}
\email{gloria.paoli@unina.it}
\begin{document}
\maketitle

\markright{A REVERSE QUANTITATIVE ISOPERIMETRIC INEQUALITY}
\markleft{A REVERSE QUANTITATIVE ISOPERIMETRIC INEQUALITY}

\begin{abstract} A stability result in terms of the perimeter is  obtained for the first Dirichlet eigenvalue of the Laplacian operator.
In particular,  we prove that, once we  fix the dimension $n\geq2$, there exists a constant $c>0$, depending only on  $n$,  such that, for every $\Omega\subset\mathbb{R}^n$ open, bounded and convex set with volume equal to the volume of a ball $B$ with radius $1$,  it holds 
\begin{equation*}
	\lambda_1(\Omega)-\lambda_1(B)\geq c\left(P(\Omega)-P(B)  \right)^{2},
\end{equation*}
where by $\lambda_1(\cdot)$ we denote  the first Dirichlet eigenvalue of a set and  by $P(\cdot)$ its perimeter.  The hearth of the present paper is a sharp estimate of the Fraenkel asymmetry in terms of the perimeter.
\\
\\
\textsc{MSC 2020:}  35J05, 35J57, 52A27\\
\textsc{Keywords:} Dirichlet-Laplace eigenvalue, Fraenkel asymmetry, Hausdorff asymmetry, quantitative inequality.
\end{abstract}

\section{Introduction}

Let $\Omega\subset\mathbb{R}^n$, with $n\geq2$, be an open set with finite Lebesgue measure. The first Dirichlet-Laplacian eigenvalue associated to $\Omega$, that we denote by $\lambda_1(\Omega)$,  is the least positive $\lambda$ such that  
\begin{equation*}\label{dir_intro}
\begin{cases}
-\Delta u=\lambda u & \mbox{in }\Omega\\
u=0 & \mbox{on }\partial \Omega
\end{cases}
\end{equation*}
admits  non-trivial solutions in $H^1_0(\Omega)$.
The classical result of Faber and Krahn  states that, among measurable  domains with fixed measure,  $\lambda_1(\cdot)$ is minimized by a ball (see \cite{Faber, Krahn, PS}).  In other words, the following  scaling invariant inequality holds:
\begin{equation}\label{Faber-Krahn_intro}
    \lambda_1(\Omega) |\Omega|^{2/n}\geq  \lambda_1(B_R) |B_R|^{2/n},
\end{equation}
where by $|\Omega|$ we denote the volume of a measurable set  $\Omega$, i.e. its Lebesgue measure,  and by $B_R$ a geniric ball in $\mathbb{R}^n$ of radius $R$. Equality in \eqref{Faber-Krahn_intro} holds if and only if $\Omega$ is equivalent to  a ball. % For these classical results we refer,  for example, to \cite{H} and \cite{K}.
 Moreover, the quantity $\lambda_1 (\Omega)$ can be variationaly  characterized by
 \begin{equation*}\label{min}
\lambda_{1}( \Omega )=\min_{\substack{v\in H^1_0(\Omega) \\ v\nequiv 0}}\dfrac{\ds\int_{\Omega} |Dv|^2\;dx}{\ds\int_{\Omega}v^2\;dx}
\end{equation*}
and has the following scaling property, for every $t>0$,
\begin{equation*}
\lambda_1(t^2 \Omega)=t^{-2}\lambda_1(\Omega).
\end{equation*}
We refer to \cite{H} and \cite{K} as survay books on the argument. 

In this paper we consider  the following class of admissible sets
\begin{equation*}
\mathcal{C}_n:=\{ \Omega\subseteq\mathbb{R}^n\;|\; \Omega \text{ convex},\; |\Omega|=|B|  \},
\end{equation*}
where $B$ is a  ball of $\mathbb{R}^n$ with radius  equal to  $1$. From now on,  we  denote by $P(\Omega)$ the perimeter of $\Omega$ in the De Giorgi sense and by $\omega_n$ the volume of the ball $B$. Our starting point is  the following conjecture in the planar case, that is stated in  \cite{FL}.

\begin{con} \cite{FL} \label{ilias}
Let $\Omega$ be an open, bounded and convex subset of $\mathbb{R}^2$ with $|\Omega|=\omega_n$,
then
\begin{equation}\label{ilias_eq}
\lambda_1(\Omega)-\lambda_1(B)\geq \beta \left(P(\Omega)- P(B ) \right)^{3/2},
\end{equation}
where   $\beta:=\frac{4\cdot3^{3/2}\;\zeta(3)}{\pi^{9/2}}$ and $\zeta(n)=\sum_{k=1}^{\infty} k^{-n}$ is the Riemann zeta function.
\end{con}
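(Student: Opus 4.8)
\emph{Proof strategy.} The plan is to first record a non-optimal bound and then sharpen it through a second-order analysis at the disk. For the first step, combine two facts: the quantitative Faber--Krahn inequality of Brasco--De Philippis--Velichkov, $\lambda_1(\Omega)-\lambda_1(B)\ge\kappa_n\,\mathcal{A}(\Omega)^2$ for every $\Omega$ with $|\Omega|=|B|$ (where $\mathcal{A}$ is the Fraenkel asymmetry), and a \emph{reverse} quantitative isoperimetric estimate for convex sets, $P(\Omega)-P(B)\le C_n\,\mathcal{A}(\Omega)$, valid as long as $\Omega\in\mathcal{C}_n$ stays in a fixed neighbourhood of $B$ --- the extremal configurations being thin convex protrusions; when $P(\Omega)-P(B)$ is large one argues directly instead, using that a convex $\Omega$ with $|\Omega|=\omega_n$ has inradius at most $n\omega_n/P(\Omega)$, hence $\lambda_1(\Omega)\ge c_n P(\Omega)^2$. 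Chaining these gives $\lambda_1(\Omega)-\lambda_1(B)\ge c_n(P(\Omega)-P(B))^2$ --- the dimension-free statement of the abstract, but with exponent $2$ in place of $3/2$.

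The loss of exponent is genuine for that argument, because quantitative Faber--Krahn is badly non-sharp exactly on the shapes that should saturate \eqref{ilias_eq}. To see what they are, carry out a second-order expansion at the disk: for $\Omega$ near $B$ write $\partial\Omega$ as the polar graph $r(\theta)=1+\rho(\theta)$ with $\rho$ small, expand $\rho=\sum_{k\ge1}(a_k\cos k\theta+b_k\sin k\theta)$, and use the area constraint together with a translation to remove the modes $k=0,1$; then
\begin{gather*}
\lambda_1(\Omega)-\lambda_1(B)=\sum_{k\ge2}c_k\,(a_k^2+b_k^2)+o(\norm{\rho}^2),\\
P(\Omega)-P(B)=\frac{\pi}{2}\sum_{k\ge2}(k^2-1)\,(a_k^2+b_k^2)+o(\norm{\rho}^2),
\end{gather*}
with explicit (Bessel) coefficients $c_k>0$. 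The decisive feature is the growth rate: $c_k$ grows only \emph{linearly}, in fact $c_k/k\to1$, whereas the perimeter weights $k^2-1$ grow quadratically; and convexity of $\Omega$ bounds the curvature of $\partial\Omega$ --- to leading order, $\rho''\le1$ --- which caps the $k$-th amplitude at order $k^{-2}$.

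Given these expansions, proving \eqref{ilias_eq} near $B$ reduces to a sharp weighted inequality: among convex $\rho$ with prescribed perimeter deficit $D$, the weighted energy $\sum_{k\ge2}c_k(a_k^2+b_k^2)$ is minimised by the regular $N$-gon with $N\sim\sqrt{\pi^3/(3D)}$ (the ``flat-sided'' case $\rho''=1$). Indeed, for the $N$-gon $\rho(\theta)=\tfrac{1}{2N^2}(\psi^2-\tfrac{\pi^2}{3})$ with $\psi=N\theta$ reduced to $(-\pi,\pi)$, i.e.\ $a_{mN}=2(-1)^m N^{-2}m^{-2}$ and all other coefficients vanishing, so $P(\Omega)-P(B)\sim\frac{\pi}{2}\sum_{m\ge1}(mN)^2\cdot 4N^{-4}m^{-4}=\frac{\pi^3}{3N^2}$ while $\lambda_1(\Omega)-\lambda_1(B)\sim\sum_{m\ge1}(mN)\cdot 4N^{-4}m^{-4}=\frac{4\zeta(3)}{N^3}$, and the ratio tends to $4\cdot 3^{3/2}\zeta(3)/\pi^{9/2}=\beta$. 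This is the origin of the $\zeta(3)$, the $3^{3/2}$ and the $\pi^{9/2}$. Finally one globalises: along a minimising sequence for the ratio $(\lambda_1(\Omega)-\lambda_1(B))/(P(\Omega)-P(B))^{3/2}$ in $\mathcal{C}_2$, Blaschke selection, continuity of $\lambda_1$ and $P$ on convex bodies, and the \emph{strict} Faber--Krahn inequality force either Hausdorff convergence to $B$ (where the analysis above applies) or a definite distance from $B$ (where numerator and denominator are bounded below by positive constants, so the inequality holds with room to spare).

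Two steps carry the real difficulty. First, computing the $c_k$ accurately enough: not merely $c_k\sim k$, but a lower bound $c_k\ge(1-o(1))k$ uniform in $k$, so that no combination of modes can undercut the $N$-gon --- this means pushing the Rayleigh--Schr\"odinger expansion of $\lambda_1$ for a radial perturbation of the disk through the large-order Bessel asymptotics, while tracking how the area correction enters at second order; the Bessel estimates are classical, so this part ought to be routine if tedious. The crux is the second: the constrained optimisation itself --- that $\rho''=1$ a.e.\ (equivalently, the $N$-gon) minimises $\sum_{k\ge2}c_k(a_k^2+b_k^2)$ at fixed perimeter deficit among convex competitors --- a rearrangement-type problem in which convexity genuinely couples all the Fourier modes, further complicated by the fact that extremal convex $\rho$ are only Lipschitz (with $\rho''$ a signed measure) and, in the non-smooth case, need not be graphs over the circle. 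Everything about the \emph{constant} $\beta$, as opposed to the exponent $3/2$, rests on carrying these two points out exactly.
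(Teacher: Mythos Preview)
The statement you are addressing is a \emph{conjecture}; the paper does not prove it and says so explicitly (``in this work we are not able to prove Conjecture~\ref{ilias}''). What the paper does prove is the exponent-$2$ bound of Theorem~\ref{main_thm}, by exactly the route you sketch in your first paragraph: the sharp quantitative Faber--Krahn inequality of \cite{BDPV} combined with a reverse estimate $\mathcal{A}_F(\Omega)\ge C\,\Delta P(\Omega)$ for convex sets near the ball (the paper's Proposition~\ref{first_step}), together with the perimeter lower bound $\lambda_1(\Omega)\ge K\,P(\Omega)^2$ of \cite{brasco} for sets far from the ball. On that part you and the paper agree, and the paper's Remark~\ref{sharp} confirms your observation that exponent~$1$ in the reverse isoperimetric estimate is optimal (saturated by regular polygons), so this chain cannot do better than exponent~$2$.

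The rest of your proposal is a strategy, not a proof. You correctly recover the polygon asymptotics and the constant $\beta$ --- this is the Grinfeld--Strang/Molinari computation cited in the paper, and it shows only that $\beta$ is sharp \emph{if} the inequality holds. The two steps you yourself flag as ``the real difficulty'' are not carried out. For the first, large-$k$ Bessel asymptotics give $c_k\sim k$, but you need a one-sided bound $c_k\ge(1-o(1))k$ valid for \emph{all} $k\ge2$, with the error controlled; you assert this ``ought to be routine'' but do not do it. The second step is the genuine obstruction and is where the conjecture actually lives: you must show that, among all convex perturbations with prescribed perimeter deficit, the weighted energy $\sum_k c_k(a_k^2+b_k^2)$ is minimised exactly when $\rho''=1$ a.e. This is not a standard rearrangement. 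The convexity constraint $u''+u\ge0$ couples all Fourier modes; the amplitude cap $|a_k|\lesssim k^{-2}$ you invoke is necessary but nowhere near sufficient to force the minimiser onto the arithmetic progression $\{mN\}$ rather than spreading mass across modes, and you give no mechanism (variational, rearrangement, or otherwise) that would do so. Your globalisation via Blaschke selection is fine but only reduces the problem to the local analysis you have not completed. In short, you have articulated the heuristic behind the conjecture and correctly located the difficulty, but nothing here closes the gap --- which is consistent with the problem's status as open.
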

This  conjecture is supported by  numerical and analytical results, in particular we refer to  \cite{GS, GS1, M}. 
\begin{prop}\cite{GS}
Let $\mathcal{P}^*_k$ be the regular polygon with $k$ edges and area equal to $\pi$. Then, as $k$ goes to $+\infty$, 
\begin{equation*}
\lambda_1(\mathcal{P}^*_k)-\lambda_1(B)\sim\beta\left(P(\mathcal{P}_k^*)-P(B)  \right)^{3/2},
\end{equation*}
where $\beta$ is the constant defined in Conjecture \ref{ilias}. 
\end{prop}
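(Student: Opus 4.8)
The plan is to expand both sides of the claimed equivalence in negative powers of $k$ and to match the leading terms; the perimeter side is elementary, while the eigenvalue side carries essentially all of the work. \emph{Perimeter expansion.} A regular $k$-gon of area $\pi$ has inradius $a_k$ determined by $a_k^2\,k\tan(\pi/k)=\pi$ and perimeter $P(\mathcal{P}^*_k)=2k\,a_k\tan(\pi/k)$, so that $P(\mathcal{P}^*_k)^2=4\pi\,k\tan(\pi/k)$. Since $k\tan(\pi/k)=\pi+\frac{\pi^3}{3k^2}+O(k^{-4})$ and $P(B)=2\pi$, one gets $P(\mathcal{P}^*_k)-P(B)=\frac{\pi^3}{3k^2}+O(k^{-4})$, and hence, by the value of $\beta$ in Conjecture~\ref{ilias}, $\beta\bigl(P(\mathcal{P}^*_k)-P(B)\bigr)^{3/2}=\frac{4\zeta(3)}{k^3}\,(1+o(1))$. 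Thus the statement reduces to the sharp eigenvalue asymptotics $\lambda_1(\mathcal{P}^*_k)-\lambda_1(B)=\frac{4\zeta(3)}{k^3}\,(1+o(1))$, i.e.\ to identifying the $k^{-3}$-coefficient of the eigenvalue deficit.

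\emph{Set-up for the eigenvalue.} Normalise $B$ to be the unit disc, write $u_0(r)=J_0(j_{0,1}r)$ and $\lambda_1(B)=j_{0,1}^2$. In polar coordinates $\mathcal{P}^*_k=\{r<\varrho_k(\phi)\}$ with $\varrho_k(\phi)=a_k/\cos\phi_\bullet$, where $\phi_\bullet\in(-\pi/k,\pi/k)$ is $\phi$ reduced modulo $2\pi/k$ about the direction of an edge midpoint; expanding, $\varrho_k=1+\eta_k$ with
\[
\eta_k(\phi)=\frac{1}{k^2}\,g(k\phi)+O(k^{-4}),\qquad g(\theta)=\frac{\theta^2}{2}-\frac{\pi^2}{6}=\sum_{m\ge1}\frac{2(-1)^m}{m^2}\cos(m\theta)\quad\bigl(|\theta|<\pi\bigr),
\]
so $\eta_k$ is a high-frequency perturbation of the circle of size $O(k^{-2})$ whose only nonzero Fourier modes sit at the frequencies $mk$, $m\ge1$, with coefficients $\alpha_m:=\frac{2(-1)^m}{k^2m^2}+O(k^{-4})$; crucially $\int_0^{2\pi}\eta_k\,d\phi=O(k^{-4})$, which encodes the area constraint and makes the first-order (Hadamard) term negligible. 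Pulling the Rayleigh quotient of $\lambda_1(\mathcal{P}^*_k)$ back to $B$ by the radial diffeomorphism $T_k(r,\phi)=(\varrho_k(\phi)r,\phi)$ gives, for $w\in H^1_0(B)$,
\[
\frac{\int_{\mathcal{P}^*_k}|\nabla(w\circ T_k^{-1})|^2}{\int_{\mathcal{P}^*_k}(w\circ T_k^{-1})^2}=\frac{\displaystyle\int_B|\nabla w|^2\,dx+\int_0^{2\pi}\!\!\int_0^1\Bigl(\frac{s(\varrho_k')^2}{\varrho_k^2}w_s^2-\frac{2\varrho_k'}{\varrho_k}w_\phi w_s\Bigr)ds\,d\phi}{\displaystyle\int_B\varrho_k^2\,w^2\,dx},
\]
and $\lambda_1(\mathcal{P}^*_k)$ is the infimum of the right-hand side over $w\not\equiv0$.

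\emph{Leading asymptotics of the deficit.} Taking $w=u_0$ already yields the (exact) bound $\lambda_1(\mathcal{P}^*_k)\le j_{0,1}^2\,k\tan(\pi/k)/\pi=j_{0,1}^2\bigl(1+\frac{\pi^2}{3k^2}+O(k^{-4})\bigr)$; this is only $O(k^{-2})$ and hence far too weak, the defect being the angular-distortion term $\frac{s(\varrho_k')^2}{\varrho_k^2}u_0'^2$, which over-counts the energy. To recover the true $k^{-3}$ rate one minimises over $w=u_0+w_1$ with $w_1=\sum_{m\ge1}b_m(s)\cos(mk\phi)$: the Euler--Lagrange equation for the mode $\nu=mk$ is $-(sb_m')'+\frac{\nu^2}{s}b_m=\nu^2\alpha_m\,u_0'(s)$ with $b_m(1)=0$, so
\[
b_m(s)=\alpha_m\bigl[s\,u_0'(s)-u_0'(1)\,s^{\,mk}\bigr]+O\bigl(\alpha_m(mk)^{-2}\bigr),
\]
whose last term is a boundary layer of width $\sim(mk)^{-1}$ near $\partial B$. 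Substituting back, the $O(k^{-2})$ contributions — those proportional to $\sum_m(mk)^2\alpha_m^2=\pi^{-1}\|\eta_k'\|_{L^2}^2$ — cancel identically among $\int_B|\nabla w_1|^2$, the cross term, and the $(\varrho_k')^2$ term, while the surviving $O(k^{-3})$ contribution is the boundary-layer energy and, using $\int_0^1u_0'(s)s^{\,mk}\,ds=u_0'(1)/(mk)+O((mk)^{-2})$, takes the form
\[
\lambda_1(\mathcal{P}^*_k)-\lambda_1(B)=C\sum_{m\ge1}(mk)\,\alpha_m^2+o(k^{-3}),
\]
where $C$ is an explicit constant assembled from Bessel data (notably $\int_0^1 J_0(j_{0,1}s)^2s\,ds=\tfrac12 J_1(j_{0,1})^2$). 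Since $\sum_m(mk)\alpha_m^2=\frac{4\zeta(3)}{k^3}\,(1+o(1))$ by the identity $\sum_m m^{-3}=\zeta(3)$, comparison with the perimeter expansion shows that the proposition is precisely the evaluation $C=1$, which, together with the verification that all discarded cubic-and-higher terms are $o(k^{-3})$ uniformly in $k$, is the computation carried out in \cite{GS}.

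\emph{The main obstacle} is exactly this last, second-order, shape-perturbation step: one must prove the \emph{exact} cancellation of the $O(k^{-2})$ angular-distortion term, pin down the constant $C$ in the residual $O(k^{-3})$ boundary-layer contribution, and control all remainders uniformly in $k$ — the latter being delicate because $\partial\mathcal{P}^*_k$ is only Lipschitz, so that $\eta_k''$ carries Dirac masses at the $k$ corners; the interplay between the small amplitude $O(k^{-2})$ and the high frequency $mk$ of the perturbation is what makes a naive first-order expansion insufficient and forces the boundary-layer analysis above.
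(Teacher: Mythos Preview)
The paper does not supply a proof of this proposition: it is quoted as a result of \cite{GS}, and the surrounding discussion only records that Grinfeld--Strang obtain it via the calculus of moving surfaces, while Molinari \cite{M} reaches the same conclusion through Schwartz--Christoffel mappings and Bessel-function expansions. There is therefore no in-paper argument to compare your proposal against.

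Your outline follows neither of those routes. You pull the Rayleigh quotient back to the disc by a radial diffeomorphism and carry out a second-order boundary-perturbation analysis, tracking the high-frequency Fourier modes $mk$ of the radial displacement $\eta_k$. The perimeter expansion is correct and the reduction to $\lambda_1(\mathcal{P}^*_k)-\lambda_1(B)=4\zeta(3)k^{-3}(1+o(1))$ is consistent with the value of $\beta$ stated in Conjecture~\ref{ilias}. The overall architecture of the eigenvalue argument --- first-order Hadamard term vanishing by the area constraint, second-order term governed by boundary-layer correctors at frequencies $mk$, resummation over $m$ producing $\zeta(3)$ --- is the right qualitative picture and is in fact closer in spirit to Molinari's approach than to the moving-surfaces method of \cite{GS}.

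That said, what you have written is not a self-contained proof. You explicitly defer both the identification $C=1$ and the uniform $o(k^{-3})$ control of the remainders to \cite{GS}, which is precisely the reference the proposition is already citing. The genuinely delicate steps you yourself flag in the final paragraph --- the exact cancellation of the $O(k^{-2})$ angular-distortion term, the evaluation of the constant $C$ from the Bessel data, and the remainder bounds in the presence of corners where $\eta_k''$ is a sum of Dirac masses --- remain outsourced. As it stands the proposal is a well-organised heuristic reduction that explains \emph{why} one should expect $\zeta(3)/k^3$, rather than an independent derivation; the substance of the result still rests on the cited computation.
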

The method used by Grinfeld and Strang in \cite{GS, GS1} to prove this fact comes from  differential geometry and relays on the calculus of moving surfaces. 
This result  was also proved by Molinari in \cite{M}, using the Schwartz-Christoffel mappings, that are useful tools to  express the Dirichlet  eigenvalue of a polygon as a series expansion, relating each expansion term to a summation over Bessel functions.

By the way, we recall that the fundamental tone of the Dirichlet-Laplacian on polygons has been widely investigated and, nevertheless, many questions are still unsolved. The famous  Poly\'a-Szeg{\"o} conjecture \cite{PS},for example,  states   that among all the $k$-gons of given area the regular one achieves the least possible $\lambda_1$ and has been settled only for $k=3$ and $k=4$, that are the only cases for which it is possible to use the Steiner symmetrization (see \cite{H, PS}).

Conjecture \ref{ilias} is also supported by numerical observations, linked  to the plot of the Blaschke-Santal\'o diagram for the triplet $(P(\cdot),\,\lambda_1(\cdot),\,|\cdot|\;)$, that  is the sets of points
$$ \{\left(P(\Omega),\lambda_1(\Omega)\right)\;|\; \; \Omega\in\mathcal{C}_2  \}.$$ 
For these numerical results we refer to \cite{AF, FL}. In particular, in \cite{FL}, the authors, by generating  random polygons woth sides between $3$ and $30$, find out that regular polygons lay on the lower part of the diagram, while the shapes that lay on the upper part of the boundary of the diagram are smooth, expecting  that stadii are a good approximation of the upper part.

%The characterization of the diagram implies the determination of all possible scale  invariant inequalities between the three quantities $(P(\Omega);\lambda(\Omega); V(\Omega))$, where $\Omega$ is in the class of admissible sets, that in this case are the convex sets.
%show the diagram)

In this work we are not able to prove Conjecture \ref{ilias} in the plane. Instead, we  prove a less strong result, but in  every dimension. The main  Theorem is the following.

\begin{thm}\label{main_thm}
	Let $n\geq 2$. 	There exists a  constant   $c>0$,  depending only on $n$,  such that, for every $\Omega\in\mathcal{C}_n$, it holds
	\begin{equation}\label{main}
	\lambda_1(\Omega)-\lambda_1(B)\geq c \left(P(\Omega)-P(B) \right)^2,
	\end{equation}
	being $B$  a ball of $\mathbb{R}^n$ with radius equal to $1$.
\end{thm}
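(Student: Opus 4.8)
The plan is to decouple the problem into two independent estimates and then chain them together. First I would pass from the perimeter deficit to the Fraenkel asymmetry, and then from the Fraenkel asymmetry to the eigenvalue deficit, using a quantitative Faber--Krahn inequality.

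Step 1: \emph{From perimeter to asymmetry.} For $\Omega\in\mathcal{C}_n$ define the Fraenkel asymmetry $\mathcal{A}(\Omega):=\min_{x_0}\frac{|\Omega\triangle B(x_0,1)|}{|B|}$. I would prove a bound of the form
\begin{equation*}
P(\Omega)-P(B)\leq C_n\,\mathcal{A}(\Omega),
\end{equation*}
valid for convex sets of volume $|B|$. This is where convexity is essential: for general sets the perimeter deficit controls only $\mathcal{A}^2$ (this is the sharp Fuglede/Figalli--Maggi--Pratelli regime), but for convex bodies the geometry is rigid enough that a \emph{linear} bound holds. The idea is that, after translating $\Omega$ to the optimal ball $B$, convexity forces the symmetric difference to be a ``thin shell'' whose measure is comparable, via the monotonicity of perimeter under inclusion of convex sets and an elementary slicing/projection argument, to the excess surface area $P(\Omega)-P(B)$. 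I expect to need the normalization $|\Omega|=|B|$ together with the isoperimetric inequality to bound $P(\Omega)$ a priori (so that $\Omega$ cannot be too elongated), and then a compactness-free, quantitative comparison: if $B\subseteq \mathrm{conv}(\Omega\cup B)$ fails, one controls the outer part of $\Omega\setminus B$ by its radial extent, which in turn is controlled by $\mathcal A(\Omega)$.

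Step 2: \emph{From asymmetry to eigenvalue deficit.} Here I would invoke the known quantitative Faber--Krahn inequality of Brasco--De Philippis--Velichkov (or the earlier Fusco--Maggi--Pratelli / Brasco--Pratelli estimates), which states that for all open sets of volume $|B|$,
\begin{equation*}
\lambda_1(\Omega)-\lambda_1(B)\geq c_n\,\mathcal{A}(\Omega)^2.
\end{equation*}
Since this holds in full generality, no convexity is needed at this stage. Combining the two displayed inequalities immediately yields
\begin{equation*}
\lambda_1(\Omega)-\lambda_1(B)\geq c_n\,\mathcal{A}(\Omega)^2\geq \frac{c_n}{C_n^2}\bigl(P(\Omega)-P(B)\bigr)^2,
\end{equation*}
which is \eqref{main} with $c=c_n/C_n^2$.

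\textbf{Main obstacle.} The quantitative Faber--Krahn bound of Step 2 is off the shelf, so essentially all the work is in Step 1: establishing the \emph{linear} (not quadratic) control $P(\Omega)-P(B)\leq C_n\,\mathcal{A}(\Omega)$ for convex bodies of volume $|B|$, with an explicit dependence only on $n$. The delicate point is the sharp exponent: one must genuinely use convexity, because for nonconvex perturbations only $P(\Omega)-P(B)\lesssim \mathcal A(\Omega)^{1/2}$ is true, which would produce the weaker exponent and destroy the result. I would handle this by reducing, via John's lemma or a direct argument, to the case where $\Omega$ is close to $B$, then writing $\partial\Omega$ as a graph over $\partial B$ (radial parametrization $r=1+\varphi(\theta)$ with $\varphi$ small), and comparing the surface-area functional $\int\sqrt{(1+\varphi)^{2(n-1)}+(1+\varphi)^{2(n-2)}|\nabla_\theta\varphi|^2}$ with the volume constraint $\int(1+\varphi)^n=n|B|$ and the asymmetry $\mathcal A(\Omega)\sim\int|\varphi|$; the first-order (linear) terms in $\varphi$ cancel by the volume constraint, but convexity prevents the gradient term from being small while $\varphi$ itself is small, giving the needed one-sided comparison. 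Controlling the regime where $\Omega$ is \emph{not} close to $B$ is easier: there both $P(\Omega)-P(B)$ and $\mathcal A(\Omega)$ are bounded below by dimensional constants (the former by a quantitative isoperimetric inequality for convex sets, the latter trivially), so the inequality holds after adjusting $C_n$.
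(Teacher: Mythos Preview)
Your two-step strategy---pass from the perimeter deficit to the Fraenkel asymmetry via a linear bound, then invoke the sharp quantitative Faber--Krahn inequality---is exactly the route the paper takes in the small-deficit regime, and your sketch of Step~1 via the radial parametrization $r=1+\varphi$ and the comparison of $\int|\varphi|$ with the gradient term is in the right spirit (the paper carries this out carefully, first for planar polygons by a case analysis and then in higher dimension by slicing over great circles).

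There is, however, a genuine gap in your handling of the large-deficit regime. The linear bound
\[
P(\Omega)-P(B)\leq C_n\,\mathcal{A}(\Omega)
\]
\emph{cannot} hold for all $\Omega\in\mathcal{C}_n$: the Fraenkel asymmetry satisfies $\mathcal{A}(\Omega)<2$ universally, while $P(\Omega)-P(B)$ is unbounded over convex sets of fixed volume (take a long thin cylinder or rectangle). Your claim that ``the isoperimetric inequality bounds $P(\Omega)$ a priori so that $\Omega$ cannot be too elongated'' is backwards---the isoperimetric inequality gives a \emph{lower} bound on perimeter in terms of volume, not an upper bound. Likewise, your fix for the far-from-ball case (``both quantities are bounded below by dimensional constants, so adjust $C_n$'') does not help: a lower bound on $P(\Omega)-P(B)$ is the wrong direction, and no finite $C_n$ can dominate an unbounded ratio $(P(\Omega)-P(B))/\mathcal{A}(\Omega)$.

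The paper resolves this by splitting into three regimes. For $\Delta P(\Omega)$ small it uses exactly your Steps~1--2. For $\Delta P(\Omega)$ large it bypasses the asymmetry entirely and uses the direct bound $\lambda_1(\Omega)\geq K\,P(\Omega)^2$ (valid for convex sets of fixed volume), which immediately gives $\Delta\lambda_1\gtrsim(\Delta P)^2$ once $\Delta P$ exceeds a threshold. For the intermediate range $\delta\leq\Delta P\leq M$ it uses a separate argument via the Hausdorff asymmetry (Melas-type quantitative Faber--Krahn combined with a lower bound $d_{\mathcal H}(\Omega,B)\gtrsim \Delta P$ coming from Aleksandrov--Fenchel), where both sides are now trapped between fixed positive constants. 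Your proposal is missing these two additional regimes; once you add them, the argument is complete.
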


In order to obtain this, we  prove an intermediate result, that is the core of this work and that has its own interest:
	there exists a constant   $C>0$, depending only on the dimension $n$,  such that, for every $\Omega\in\mathcal{C}_n$, it holds
	\begin{equation}\label{first_ineq_intro}
	\mathcal{A}_F(\Omega)\geq C \left(  P(\Omega)-P(B) \right),
	\end{equation}
	where  $\mathcal{A}_F(\Omega)$ is the Fraenkel asymmetry of $\Omega$ (a $L^1-$distance between sets), defined as 
	\begin{align}\label{fraenkel_def}
	\mathcal{A}_F(\Omega)=\min_{x\in\mathbb{R}^n}\bigg \{ \dfrac{|\Omega\Delta B_R(x)|}{|\Omega|}   \;:\:\text{$B_R(x)$ is a ball s.t. $|B_R(x)|=|\Omega|$}\bigg  \},
	\end{align}
	denoting by $\Delta $  the symmetric  difference between two sets and by $B_R(x)$ the ball of radius $R$ centered at the point $x$.
	Inequality \eqref{first_ineq_intro} is true provided  that the isoperimetric deficit $P(\Omega)-P(B)$ is small enough. %and the exponent $1$ of the isoperimetric deficit is sharp.

	We prove \eqref{main} combing \eqref{first_ineq_intro} with the sharp  stability result for the Faber-Krahn inequality, proved in  \cite{BDPV}, that states  that 
	there exists a constant $\bar{C}>0$ such that for every open set $\Omega$ with $|\Omega|=\omega_n$, the following inequality holds:
	\begin{equation}\label{quantitative_intro}
	\lambda_1(\Omega)-\lambda_1 (B)\geq \bar{C}\mathcal{A}_F(\Omega)^2.
	\end{equation}
	
	Unfortunately, as we will show  providing  a class of counter-examples, inequality \eqref{first_ineq_intro} is not true when the difference of perimeters has exponent $3/4$, that is the target power to obtain \eqref{ilias_eq}, if we combine it with \eqref{quantitative_intro}. The exponent $1$ of the difference of the perimeters in \eqref{first_ineq_intro} is, indeed, an optimal exponent, since it is reached asymptotically by regular polygons with number of sides that goes to infinity.  This is the reason for which we cannot use this strategy  to prove  Conjecture \ref{ilias}. So, this work represents a step forward to the resolution of this still open problem, generalising and obtaining some results  also in dimension greater than $2$.

Moreover, we can say that inequality \eqref{first_ineq_intro} is a sort of reverse quantitative inequality.  We recall indeed that  the sharp quantitative isoperimetric inequality, proved in \cite{FMP} in any dimension, asserts that there exists a constant $K>0$ such that, for every Borel set $\Omega\subset\mathbb{R}^n$ with $|\Omega|=\omega_n$,
\begin{equation}\label{fusco-maggi-pratelli_intro}
	 \left( P(\Omega)-P(B)\right)\geq K \mathcal{A}_F^2(\Omega).
\end{equation}
On the contrary to \eqref{fusco-maggi-pratelli_intro}, in our inequality \eqref{main} the terms of the difference of the perimeter is used as an asymmetry functional and, so, it is situated  in the left hand side of the inequality, obtaining, in this way, a quantitative isoperimetric inequality for $\lambda_1(\cdot)$ in terms of the difference of the perimeter.

As a consequence of the fact that the Fraenkel asymmetry is a bounded quantity,  more precicelly $\mathcal{A}_F(\Omega)\in[0,2)$ and is zero if and only if $\Omega $ is a ball,  inequality \eqref{first_ineq_intro} cannot be true when $\Omega$ is, for example,  a long and flat domain and this is the reason for which, in the non local case,  the proof of inequality \eqref{main} will be  treated separatelly.

$ $ \\
The paper is organized as follows. In Section $2$ we recall some preliminary definitions and results about convex sets and some  spectral inequalities, proving also a lower bound of the Hausdorff distance in terms of the perimeter.  Section $3$ is devoted to the proof of  the sharp geometric inequality \eqref{first_ineq_intro}, that is the core of the present paper. Finally, in Section $4$ we prove the main Theorem \ref{main_thm}.%,  while in Section $5$ we conclude with some remarks.

\section{Notations and preliminary results}  % backgroung material
\subsection{Notations and basic definitions}

We will use the following notations: $\cdot$ is the Euclidean scalar product in $\mathbb{R}^n$, 
$B$ is a generic ball of radius equal to $1$,  $B(O)$ is the ball with radius equal to  $1$ centered at the fixed origin $O=(0,\dots,0)$ and   $\mathbb{S}^{n-1}$ is  the unit sphere in $\mathbb{R}^n$. Moreover, we denote by $|\cdot|$ the Lebesgue measure of a set and  by   $\mathcal{H}^k$, for $k\in [0,n)$, the $k-$dimensional Hausdorff measure in $\mathbb{R}^n$. 
The perimeter of $\Omega$ in $\mathbb{R}^n$ will be denoted by $P(\Omega)$ and, if $P(\Omega)<\infty$, we say that $\Omega$ is a set of finite perimeter. In our case $\Omega$ is a convex and bounded set and this ensures us that it is a set of finite perimeter and that $P(\Omega)=\mathcal{H}^{n-1}(\partial\Omega)$.

For simplicity, we introduce the following notation:
\begin{equation*}
\Delta \lambda_1(\Omega):=\lambda_1(\Omega)-\lambda_1(B), \qquad \Delta P(\Omega):=P(\Omega)-P(B).
\end{equation*}

 We give  the definition  of tangential gradient, that can be found in \cite{B}.
\begin{defn} \label{tangential_euclidean_gradient}
	Let $\Omega$ be an open, bounded subset of $\mathbb{R}^n$ with Lipschitz boundary and let $u:\mathbb{R}^n\rightarrow\mathbb{R}$ be a Lipschitz function. We can define the tangential gradient of $u$ for almost every $x\in \de\Omega$ as
	$$ \nabla_{\tau}u(x)=\nabla u(x)-\langle \nabla u(x),\nu_{\de\Omega}(x)\rangle\nu_{\de\Omega}(x),$$
	where $\nu_{\de\Omega}$ is the outer unit normal to $\de\Omega$,	whenever $\nabla u$  exists at $x$.
\end{defn}
 From now on we will consider open, bounded  and convex subsets of $\mathbb{R}^n$, with $n\geq 2$.
and,  for the content  of the following,   we refer mainly  to \cite{S}. We start by  recalling  the definition of Hausdorff distance  between two open, bounded and convex sets  $\Omega,K\subset\mathbb{R}^n$, that is:
\begin{equation}
\label{disth}
d_{\mathcal{H}}(\Omega,K)=\inf \left\{  \varepsilon>0  \; :\; \Omega\subset K+B_{\varepsilon},\; K\subset \Omega+B_{\varepsilon}\right\}, 
\end{equation}
where by $+$  we denote the Minkowski sum between sets and $B_\varepsilon$ is a geniric ball of radius $\varepsilon$.
Note that  we have $d_\mathcal{H}(\Omega,K)=d_\mathcal{H} (\partial \Omega, \partial K)$ and that the following rescaling property holds
\[
d_\mathcal{H}(t\Omega,tK)=t\,d_\mathcal{H}(\Omega,K), \quad t>0.
\]
We recall the definition of support function of a convex set.
\begin{defn}\label{support}
	Let $\Omega$ be an open, bounded and convex set of $\mathbb{R}^n$. The support function associated to $\Omega$ is defined, for every $y\in \mathbb{R}^n$, as follows:
	\begin{equation}\label{support_eq}
	h_\Omega(y)=\max_{x\in \Omega}\left(x\cdot y\right).
	\end{equation}
\end{defn}
If there is no possibility of confusion, we use the notation $h$ instead of $h_\Omega$. It is easy to see that the support function associated to a ball of radius $R$ centered at the origin is constantly equal to $R$. In the following remark the relation between the Haussdorf distance and the support function is pointed out.
\begin{rem}
	Let $K,\Omega$ be two open,  bounded and convex subsets  of $\mathbb{R}^n$. It holds:
	\begin{equation*}
	d_{\mathcal{H}}(\Omega,K)=|| h_\Omega-h_K||_{L^{\infty}(\mathbb{S}^{n-1})}.
	\end{equation*}
\end{rem}

We  define now  another   kind  asymmetry, usually used  for convex sets, different from the one recalled  in \eqref{fraenkel_def}. 
\begin{defn} Let $\Omega$ be an open, bounded and convex subset of $\mathbb{R}^n$. We define the Hausdorff asymmetry as
	\begin{equation}\label{hausdorf_def}
	\mathcal{A_\mathcal{H}}(\Omega)= \min_{x\in\mathbb{R}^n}\bigg \{\frac{d_{\mathcal{H}}(\Omega,B_R(x)) }{R}  \;:\:\text{$B_R(x)$ is a ball centered at $x$  s.t. $|B_R(x)|=|\Omega|$}\bigg  \}.
	\end{equation} 
\end{defn}
%We recall also the definition of Fraenkel asymmetry,that is  a suitable notion of the distance from the spherical distance, when  we drop the hypothesis of convexity:
%\begin{equation}\label{fraenkel_def}
 %%end{equation}

%\textcolor{red}{We observe that, since $|B_R(x)|=|\Omega|$, we have that $|\Omega\Delta B_R(x)|=2|\Omega\setminus B_R(x)|=2|B_R(x)\setminus \Omega|$. Moreover, the infimum in \eqref{fraenkel} is actually a minimum. }

Finally, if  $\Omega\in\mathcal{C}_n$  is such that  the origin $O\in\Omega$, then its boundary can be parametrized as :
\begin{equation}\label{boundary}
\de \Omega=\{   y\in\mathbb{R}^n\;|\;y=r(x); \; x\in\mathbb{S}^{n-1} \}
\end{equation}
and we have the following results  (see for the exact computations \cite{F}):
\begin{equation}\label{asi_convex}
\mathcal{A}_F(\Omega)=\frac{1}{n} \ds\int_{\mathbb{S}^{n-1}} |r^n(x)-1|\;d\mathcal{H}^{n-1}(x)
\end{equation}
and 
\begin{equation}\label{peri_convex}
P(\Omega)=\int_{\mathbb{S}^{n-1}}\sqrt{r^{2(n-1)}(x)+r^{2(n-2)}(x)  |\nabla_{\tau}r|^2 }\:d\mathcal{H}^{n-1}.
\end{equation}
In particular, if $\Omega\in\mathcal{C}_2$, we can use the classical polar coordinates representation, that this
\begin{equation}\label{gauge}
\Omega=\bigg\{  (r,\theta) \in [0,\infty)\times [0,2\pi)\;|\;r<\frac{1}{u(\theta)}\bigg\},
\end{equation}
where $u$ is a positive and $2\pi-$periodic function, often called the gauge function of $\Omega$, for a reference see e.g \cite{S,LNP}. It is well known that $\Omega$ is convex if and only if $u''+u\geq 0$.
Moreover,  we  can write the volume, the perimeter and the Fraenkel asymmetry of $\Omega$ in terms of $u$:
\begin{align}\label{per_vol}
& |\Omega|=\frac{1}{2}\ds\int_{0}^{2\pi}\dfrac{1}{u^2(\theta)}\;d\theta;\\
& P(\Omega)=\ds\int_{0}^{2\pi}\dfrac{\sqrt{u^2+u'^2}}{u^2}d\theta
\end{align}
and, setting $r(\theta):=1/u(\theta)$, we have
\begin{equation}\label{still_long}
\mathcal{A}_F(\Omega)=\ds\int_{0}^{2\pi}\big |\ds\int_{r(\theta)}^{1} r\;dr|\;d\theta=\ds\int_{0}^{2\pi}\dfrac{|1-r^2(\theta)|}{2}d\theta=\frac{1}{2}\ds\int_{0}^{2\pi}\dfrac{|1-u^2(\theta)|}{u^2(\theta)}\;d\theta.
\end{equation}

$ $ \\

\subsection{Definition and some properties on quermassintegrals}

We give in this Section  the definition of quermassintegrals, referring  to \cite{S}. Let $ \Omega$ be  an open, bounded and convex subset  of $\mathbb{R}^n$.
We define the outer  parallel body of $\Omega$ at distance $\rho$ as the  Minkowski sum
$$ \Omega+\rho B=\{ x+\rho y\in\mathbb{R}^n\;|\; x\in \Omega,\;y\in B \}.$$%=\{ x\in\mathbb{R}^n\;|\; d(x)\leq \rho \},$$ 
The Steiner formula asserts that
\begin{equation}\label{general_steiner}
|\Omega+\rho B|=\sum_{i=0}^{n}\binom{n}{i} W_i(\Omega)\rho^i. 
\end{equation}
and
\begin{equation}\label{general_steiner}
P(\Omega+\rho B)=n\sum_{i=0}^{n-1}\binom{n}{i} W_{i+1}(\Omega)\rho^i,
\end{equation}
where the coefficients $W_i(\Omega)$ are known as quermassintegrals. In particular, we have:
\begin{equation}
W_0(\Omega)=|\Omega|;\qquad n W_1(\Omega)=P(\Omega);\qquad W_n(\Omega)=\omega_n
\end{equation}
and
\begin{equation}\label{mean_1}
W_{n-1}(\Omega)=\dfrac{\omega_n}{2}\: \omega(\Omega),
\end{equation}
where $\omega(\Omega)$ is  called mean width of the convex body $\Omega$ and it is defined as 
\begin{equation}\label{mean}
\omega(\Omega)=\frac{1}{n\omega_n}\int_{\mathbb{S}^{n-1}}\left(h_\Omega(x)+h_\Omega(-x)\right)\;d\mathcal{H}^{n-1}(x),
\end{equation}
representing the mean value over all possible directions of the distance between parallel supporting hyperplanes to $\Omega$. %Moreover, we  have that
%\begin{equation}\label{second W}
%\lim\limits_{\rho \to 0^+} \dfrac{P(\Omega+\rho B_1)-P(\Omega)}{\rho}=n(n-1) W_2(\Omega).
%\end{equation}
%Let us assume now that $\Omega_0$ is also of class $C^2_+$, i.e. $\Omega_0$ has the boundary of class $C^2$ and has non-vanishing Gaussian curvature. We introduce, for $j=1,\cdots,(n-1)$, $H_j$ the $j$-th normalized elementary symmetric function of  the principal curvatures $\kappa_1,\cdots, \kappa_{n-1}$ of $\partial \Omega_0$:
%\begin{equation*}H_j:=  \binom{n-1}{j} ^{-1}  \sum_{1\leq i_1<\cdots<i_j\leq n-1} \kappa_{i_1}\cdots \kappa_{i_j}\end{equation*}and we put $H_0=1$. We have that \begin{equation*}W_i(\Omega_0)=\frac{1}{n}\int_{\de\Omega_0} H_{i-1}\;d\mathcal{H}^{n-1}, \quad i=1,\cdots, n,\end{equation*}and, from \eqref{second W}, we obtain \begin{equation*}\lim\limits_{\rho \to 0^+} \dfrac{P(\Omega_0+\rho B_1)-P(\Omega_0)}{\rho}=(n-1)\int_{\de \Omega_0}H_1 \,d\mathcal{H}^{n-1}.\end{equation*}
Finally, we recall   the Aleksandrov-Fenchel inequalities:
\begin{equation}\label{aleksandrov-fenchel}
\left(\dfrac{W_j(\Omega)}{\omega_n}\right)^{\frac{1}{n-j}}\geq \left(\dfrac{W_i(\Omega)}{\omega_n}\right)^{\frac{1}{n-i}},
\end{equation}
for $0\leq i<j<n$, with equality if and only if $\Omega$ is a ball. 

In the following  Proposition, using the tools introduced in this Section,  we prove a lower bound of the Haussdorf distance in terms of the perimeter, that will be usefull for  the proof of the main Theorem.

\begin{prop}\label{per_Haus} 
	Let $\Omega$ be an open, bounded and convex subset of $\mathbb{R}^n$, $n\geq 2$, with $|\Omega|=\omega_n$ and containing the origin $O$. Then 
 	\begin{align}\label{z}
	d_{\mathcal{H}}(\Omega,B) \geq K   P(\Omega)^{\frac{2-n}{n-1}}   \Delta P(\Omega)% \left( P(\Omega)^{1/(n-1)}-   P(B)^{1/(n-1)} \right)
	\end{align}
	where $K$ is a  positive constants depending only on the dimension $n$ and $B$ is the  ball centered at the origin with radius equal to $1$.
\end{prop}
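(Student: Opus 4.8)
The plan is to compare $\Omega$ with the ball $B$ through a suitable "sandwich" construction using outer parallel bodies, exploiting the Steiner formula and the Aleksandrov--Fenchel inequalities. Write $\delta := d_{\mathcal H}(\Omega, B)$. I would like to produce an inclusion of the form $B_{1-a} \subset \Omega \subset B_{1+b}$ (up to translation) with $a,b$ controlled by $\delta$, but since $|\Omega| = |B| = \omega_n$ is fixed, $\Omega$ cannot be contained in a strictly smaller ball; the useful direction is that $\Omega + \rho B$, for a well-chosen $\rho$ comparable to $\delta$, engulfs a ball that is appreciably larger than $B$, and hence has substantially larger perimeter than $B$. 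Concretely, since $d_{\mathcal H}(\Omega,B)=\delta$ means $B \subset \Omega + B_\delta$ and $\Omega \subset B + B_\delta = B_{1+\delta}$, the second inclusion gives the basic monotonicity $P(\Omega) \leq P(B_{1+\delta}) = n\omega_n(1+\delta)^{n-1}$, which already bounds $\Delta P(\Omega) \leq n\omega_n\big((1+\delta)^{n-1}-1\big) \leq C(n)\,\delta$ whenever $\delta$ is bounded; this is the wrong direction, so the real content must come from a lower bound on $\delta$ in terms of $\Delta P$.

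The key step I would carry out is as follows. Consider the outer parallel body $\Omega_\rho := \Omega + \rho B$. On one hand, by the Steiner formula for the perimeter,
\[
P(\Omega_\rho) = n\sum_{i=0}^{n-1}\binom{n}{i} W_{i+1}(\Omega)\rho^i,
\]
and the derivative at $\rho = 0$ is $\tfrac{d}{d\rho}P(\Omega_\rho)\big|_{\rho=0} = n(n-1)W_2(\Omega)$. Using the Aleksandrov--Fenchel inequality $\big(W_2(\Omega)/\omega_n\big)^{1/(n-2)} \geq \big(W_1(\Omega)/\omega_n\big)^{1/(n-1)}$, i.e. $W_2(\Omega) \geq \omega_n\big(P(\Omega)/(n\omega_n)\big)^{(n-2)/(n-1)}$, we get a lower bound on this derivative of order $P(\Omega)^{(n-2)/(n-1)}$. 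More usefully, I would compare $\Omega$ with the concentric ball: since $|\Omega| = \omega_n = |B|$ and $\Omega \subset B + B_\delta$ fails to help, I instead note that by the isodiametric/containment argument there is a point $x_0$ with $B_{r_0}(x_0)\subset \Omega$ where $1 - r_0 \leq \delta$ (the inradius cannot drop by more than $\delta$ below $1$, because $\Omega + B_\delta \supset B$ forces $\Omega$ to reach within $\delta$ of the center of that ball in every direction — here convexity is essential). Then $\Omega \supset B_{r_0}(x_0)$ with $r_0 \geq 1-\delta$, but this gives $P(\Omega) \geq P(B_{r_0}) = n\omega_n r_0^{n-1} \geq n\omega_n(1-\delta)^{n-1}$, again a lower bound that degenerates the wrong way. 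The correct mechanism is the reverse: fix the radius $R$ of the ball with $|B_R| = |\Omega_\rho|$; by the Steiner volume formula $|\Omega_\rho| = \omega_n(1 + c_1\rho + \cdots)$ with $c_1 = nW_1(\Omega)/\omega_n = P(\Omega)/\omega_n \geq n$, so $R \geq 1 + c(n)\rho$ for a dimensional constant, while on the other hand $\Omega_\rho \subset B + B_{\delta+\rho}$ forces $R \leq 1 + \delta + \rho$. Choosing $\rho$ a small fixed multiple of $\delta$ (or letting $\rho\to 0$) and combining, I would extract $\delta \geq c(n)(c_1 - n)$, and then relate $c_1 - n = (P(\Omega)-P(B))/\omega_n \cdot (1/n)$... — more care is needed here to get the exact power $P(\Omega)^{(2-n)/(n-1)}$.

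Let me state the cleanest route, which is almost surely the intended one. Apply the Aleksandrov--Fenchel inequality \eqref{aleksandrov-fenchel} with $i = n-1$, $j = $ chosen so that $W_{n-1}$, the mean-width quermassintegral \eqref{mean_1}, enters. Since the mean width satisfies $\omega(\Omega) \geq 2$ with equality iff $\Omega = B$, but more quantitatively $\omega(\Omega) \leq 2(1+\delta)$ (because all supporting hyperplanes of $\Omega$ lie within $\delta$ of those of $B$, as $\Omega \subset B_{1+\delta}$ and $B \subset \Omega + B_\delta$), we obtain $W_{n-1}(\Omega) \leq \tfrac{\omega_n}{2}\cdot 2(1+\delta) = \omega_n(1+\delta)$. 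Now the Aleksandrov--Fenchel chain $\big(W_1/\omega_n\big)^{1/(n-1)} \leq \big(W_{n-1}/\omega_n\big)$ gives $\big(P(\Omega)/(n\omega_n)\big)^{1/(n-1)} \leq W_{n-1}(\Omega)/\omega_n \leq 1 + \delta$, hence
\[
1 + \delta \geq \Big(\tfrac{P(\Omega)}{n\omega_n}\Big)^{1/(n-1)},
\]
so that
\[
\delta \geq \Big(\tfrac{P(\Omega)}{n\omega_n}\Big)^{1/(n-1)} - 1 = \Big(\tfrac{P(\Omega)}{P(B)}\Big)^{1/(n-1)} - 1.
\]
Finally, writing $P(\Omega) = P(B) + \Delta P(\Omega)$ and using the elementary inequality $(1+t)^{1/(n-1)} - 1 \geq \tfrac{c}{n-1}\, t \,(1+t)^{(2-n)/(n-1)}$ for $t \geq 0$ — which follows from the mean-value theorem applied to $s \mapsto s^{1/(n-1)}$ on $[1, 1+t]$, giving a factor $\tfrac{1}{n-1}(1+t)^{(2-n)/(n-1)}$ — we conclude
\[
\delta \geq \frac{1}{n-1}\,\frac{\Delta P(\Omega)}{P(B)}\Big(\frac{P(\Omega)}{P(B)}\Big)^{(2-n)/(n-1)} = K\, P(\Omega)^{(2-n)/(n-1)}\,\Delta P(\Omega),
\]
with $K = \tfrac{1}{n-1}P(B)^{(2-n)/(n-1) \cdot(-1) - 1}$ — i.e. $K$ a purely dimensional constant. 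The main obstacle, and the step requiring the most care, is the estimate $\omega(\Omega) \leq 2(1+\delta)$, equivalently $W_{n-1}(\Omega) \leq \omega_n(1+\delta)$: this needs the Hausdorff bound $\Omega \subset B_{1+\delta}(x^*)$ for the optimal center $x^*$, together with monotonicity of $W_{n-1}$ (and of all quermassintegrals) under set inclusion, which is a standard consequence of the monotonicity of mixed volumes; translation invariance of $W_{n-1}$ lets us ignore the centering. Everything else is a bookkeeping of dimensional constants.
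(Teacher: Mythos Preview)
Your final ``cleanest route'' is correct and is essentially the paper's own argument: bound $W_{n-1}(\Omega)\leq \omega_n(1+\delta)$ via $\Omega\subset B_{1+\delta}$ (equivalently, via $\|h_\Omega-1\|_\infty=\delta$ and the mean-width formula), apply Aleksandrov--Fenchel with $i=1$, $j=n-1$ to get $\big(P(\Omega)/P(B)\big)^{1/(n-1)}\leq 1+\delta$, and finish with the mean-value theorem. One small simplification: since here $B$ is the fixed ball at the origin and $\delta=d_{\mathcal H}(\Omega,B(O))$, the inclusion $\Omega\subset B_{1+\delta}(O)$ is immediate from the definition of Hausdorff distance, so no ``optimal center $x^*$'' or translation argument is needed.
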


\begin{proof}
 We have
\begin{equation*}
d_{\mathcal{H}}(\Omega,B)=||h_\Omega-h_B ||_\infty\geq ||h_\Omega||_\infty-1,
\end{equation*}
where the quantity in the right hand side is positive, since $||h_\Omega||_{\infty}=\max\{|x|: x\in \Omega \}$ and having fixed the volume $|\Omega|=\omega_n$ . Consequently,
\begin{equation}\label{a}
\frac{1}{n\omega_n}\int_{\mathbb{S}^{n-1}}h_\Omega(x)\;d\mathcal{H}^{n-1}(x)\leq  d_{\mathcal{H}}(\Omega,B)+1.
\end{equation}
On the other hand, we have also that 
\begin{align}\label{d}
d_{\mathcal{H}}(\Omega,B) & \geq \frac{1}{n\omega_n}\int_{\mathbb{S}^{n-1}} |h_{\Omega}(x)-h_B(x)|\:d\mathcal{H}^{n-1}(x)\\&\geq \omega(\Omega)-\frac{1}{n\omega_n} \int_{\mathbb{S}^{n-1}}h_\Omega(x)\;d\mathcal{H}^{n-1}(x)- 1
\end{align}
So, by\eqref{a} and \eqref{mean_1}, we obtain 
\begin{equation}\label{dd}
d_{\mathcal{H}}(\Omega,B) \geq c_n \dfrac{W_{n-1}(\Omega)-W_{n-1}(B)}{2}
\end{equation}
and, using the  Alexandrov-Fenchel inequality  \eqref{aleksandrov-fenchel} for  $j=n-1$ and $i=1$, we have
\begin{equation}\label{af}
W_{n-1}(\Omega)\geq \dfrac{n^{\frac{n}{n-1}}}{\omega_n^{1/(n-1)}}   P(\Omega)^{1/(n-1)},
\end{equation}
where the equality if and only if $\Omega$ is a ball. 
So, combing  \eqref{dd} with \eqref{af}, we obtain
\begin{align}\label{z}
d_{\mathcal{H}}(\Omega,B) \geq c_n \left( P(\Omega)^{1/(n-1)}-   P(B)^{1/(n-1)} \right)\geq K_n P(\Omega)^{\frac{2-n}{n-1}} \Delta P(\Omega),
\end{align}
where $c_n, K_n$ are positive constants depending only on the dimension $n$.

\end{proof}

\begin{rem}\label{Haus_per2 }

	If we are in dimension $n=2$, since 
	\begin{equation*}\label{per_sup}
	P(\Omega)=\int_0^{2\pi} h(\theta)d\theta,
	\end{equation*}
	where $h$ is the support function associated to $\Omega$ as defined in \eqref{support_eq}, we have
	\begin{equation}\label{den_do}
	d_{\mathcal{H}}(\Omega, B)=|| h_\Omega-1||_{\infty},\geq \frac{1}{2\pi}\int_0^{2\pi}|h_\Omega(\theta)-1|\geq\frac{1}{2\pi}\Delta P(\Omega).
	\end{equation}

\end{rem}

\subsection{Spectral inequalities: background material }

We recall now some spectral results that we will use in the sequel. The first of them is the sharp  quantitative version of the Faber-Krahn inequality,  proved in \cite{BDPV}, where the sharpness is verified using a suitable family of ellipsoids.  
\begin{thm}[Sharp quantitative Faber-Krahn inequality]\label{quant_krahn_d}
	Let $n\geq 2$.	There exists a constant $\bar{C}>0$, depending only on $n$, such that, for every open set $\Omega$ with $|\Omega|=\omega_n$, it holds
	\begin{equation}\label{quantitative_v}
	\lambda_1(\Omega)-\lambda_1(B)\geq \bar{C} \mathcal{A}_F(\Omega)^2
	\end{equation} 
	and the exponent $2$ is sharp.
\end{thm}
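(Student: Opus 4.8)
The plan is to prove Theorem~\ref{quant_krahn_d} along the lines of \cite{BDPV}, i.e. by the selection-principle strategy of Cicalese--Leonardi: first one settles the inequality for sets that are small normal perturbations of the ball (a Fuglede-type computation), then one shows that a suitable penalized minimization produces almost-optimal sets that are regular, hence nearly spherical, so that the first step applies and yields a contradiction if the theorem were false.

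\textbf{Step 1 (Fuglede-type estimate for nearly spherical sets).} First I would prove the inequality when $\partial\Omega=\{(1+\varphi(x))\,x:\ x\in\mathbb{S}^{n-1}\}$ with $\|\varphi\|_{W^{1,\infty}(\mathbb{S}^{n-1})}\le\eps$ small, $|\Omega|=\omega_n$, and the barycenter of $\Omega$ at the origin. Using the variational characterization of $\lambda_1(\Omega)$ with a test function built from the (radial) first eigenfunction $u_B$ of $B$ and expanding to second order in $\varphi$, one obtains
\[
\lambda_1(\Omega)-\lambda_1(B)\ \ge\ c(n)\,\|\varphi\|_{H^{1/2}(\mathbb{S}^{n-1})}^2 .
\]
The volume and barycenter normalizations annihilate the zeroth- and first-order spherical-harmonic modes of $\varphi$, which is exactly what makes the relevant quadratic form coercive via the spectral gap on $\mathbb{S}^{n-1}$. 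Since $\mathcal{A}_F(\Omega)\le C\|\varphi\|_{L^1(\mathbb{S}^{n-1})}\le C\|\varphi\|_{H^{1/2}(\mathbb{S}^{n-1})}$, this gives the claim (even in a stronger norm) for nearly spherical $\Omega$.

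\textbf{Step 2 (Reduction to the nearly spherical case).} Argue by contradiction: if the theorem fails, there exist $\Omega_j$ with $|\Omega_j|=\omega_n$, $\mathcal{A}_F(\Omega_j)=:\eps_j>0$ and $\lambda_1(\Omega_j)-\lambda_1(B)=o(\eps_j^2)$; by the Faber--Krahn inequality and a compactness argument $\eps_j\to0$. For each $j$ replace $\Omega_j$ by a minimizer $U_j$ of the spring-penalized functional
\[
U\ \longmapsto\ \lambda_1(U)+\Lambda\,\bigl|\mathcal{A}_F(U)-\eps_j\bigr|,\qquad |U|=\omega_n,
\]
with $\Lambda=\Lambda(n)$ fixed and large. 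One checks that a minimizer exists (restricting competitors to a fixed large ball, which is legitimate because minimizers are uniformly bounded), that $U_j\to B$ in $L^1$ and in the Hausdorff sense, and---this is the technical core---that $U_j$ is a quasi-minimizer of $\lambda_1$ satisfying uniform density estimates, hence $\partial U_j$ is $C^{1,\alpha}$ with uniform bounds. Thus $U_j$ is nearly spherical for $j$ large; after recentering, Step~1 gives $\lambda_1(U_j)-\lambda_1(B)\ge c(n)\,\mathcal{A}_F(U_j)^2$. Comparing $U_j$ with $\Omega_j$ in the penalized functional keeps $\mathcal{A}_F(U_j)$ comparable to $\eps_j$ while $\lambda_1(U_j)-\lambda_1(B)=o(\eps_j^2)$, a contradiction.

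\textbf{Step 3 (Sharpness of the exponent).} Finally, test with ellipsoids $E_t$ of volume $\omega_n$ and semi-axes $1+O(t)$ chosen so that $\mathcal{A}_F(E_t)\asymp t$. Since $B$ is a critical point of $\lambda_1$ under the volume constraint, the first shape derivative vanishes and a second-order expansion gives $\lambda_1(E_t)-\lambda_1(B)\le C\,t^2$, so no exponent below $2$ is admissible. The main obstacle in the whole scheme is Step~2: establishing the density estimates and $C^{1,\alpha}$ regularity, uniformly in $j$, for minimizers of a functional whose leading term $\lambda_1$ depends on the full set through its eigenfunction, which forces one to localize and control the eigenvalue under small domain perturbations; by comparison, the Fuglede expansion of Step~1 is delicate but essentially a controlled Taylor expansion once the constraints are in place.
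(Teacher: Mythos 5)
This statement is not proved in the paper at all: it is Theorem 2.x, recalled verbatim from \cite{BDPV} and used later as a black box (the paper even points out that the sharpness in \cite{BDPV} is checked with a family of ellipsoids, which matches your Step 3). So the appropriate treatment here is a citation, and what you have written is a sketch of how one might reprove the cited result, not a proof; the genuinely hard parts are exactly the ones you flag and postpone (existence of the penalized minimizers, uniform density estimates and uniform $C^{1,\alpha}$ regularity, and the transfer of the Fuglede-type bound back to the original sets).

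Two concrete problems with the sketch itself. First, Step 1 is directionally wrong as stated: plugging a test function built from the eigenfunction $u_B$ of the ball into the Rayleigh quotient on $\Omega$ gives an \emph{upper} bound for $\lambda_1(\Omega)$, whereas the Fuglede-type estimate you want is a \emph{lower} bound for $\lambda_1(\Omega)-\lambda_1(B)$; the actual argument requires a second-order expansion involving the true eigenfunction (or state function) of the perturbed domain, and this is the delicate computation, not a routine Taylor expansion. Second, Step 2 is not the route of \cite{BDPV}: they deliberately avoid penalizing $\lambda_1$ directly, because the regularity theory needed for a functional whose leading term is the eigenvalue (with its $L^2$-normalization) is problematic; instead they prove a sharp quantitative Saint-Venant inequality for the torsion energy, where Alt--Caffarelli free-boundary regularity applies to the penalized problem, and then transfer the estimate to $\lambda_1$ by a quantitative Kohler--Jobin-type argument. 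Your direct penalization of $\lambda_1(U)+\Lambda\,|\mathcal{A}_F(U)-\varepsilon_j|$ therefore hides, rather than addresses, the main technical obstruction. For the purposes of this paper, replace the sketch by the citation to \cite{BDPV}; if you want a self-contained account, you would have to follow their torsion-energy detour or justify the regularity claims for the eigenvalue functional, neither of which is done here.
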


A quantitative type result for the first Dirichlet  eigenvalue was  proved before   by Melas  in \cite{Me} for  the case of convex sets in $\mathbb{R}^n$, with a non-sharp exponent (for a reference see also \cite{brasco_survey} and \cite{FMP1}).

\begin{thm}[Quantitative Faber-Krahn inequality]\label{quant_krahn_d}
	Let $n\geq 2$.	There exists a constant $\bar{c}>0$, depending only on $n$, such that, for every  $\Omega\subseteq \mathbb{R}^n$ open bounded and convex, with $|\Omega|=\omega_n$, it holds
	\begin{equation}\label{quantitative_v}
	\lambda_1(\Omega)-\lambda_1(B)\geq \bar{c}\; d_\mathcal{M}(\Omega)^{2n},
	\end{equation} 
where 
\begin{equation*}\label{melly}
d_{\mathcal{M}}(\Omega):=\min \Big \{    \max     \Big\{    \dfrac{|\Omega\setminus B_1|}{\Omega} ,\; \dfrac{|B_2\setminus\Omega|}{B_2}\Big \} \;:\: B_1\subset\Omega\subset B_2 \; {\rm balls}          \Big\}.
\end{equation*}
\end{thm}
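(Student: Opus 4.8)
\noindent The plan is to split the argument according to the size of the deficit $\Delta\lambda_1(\Omega)$ and, in the decisive regime, to combine the sharp quantitative Faber--Krahn inequality \eqref{quantitative_intro} of \cite{BDPV} with a purely geometric comparison, valid for convex bodies, between the Melas asymmetry $d_{\mathcal M}$ and the Fraenkel asymmetry $\mathcal{A}_F$.

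First one records that, optimising the two balls in the definition of $d_{\mathcal M}$ independently,
\[
d_{\mathcal M}(\Omega)=\max\left\{\,1-\rho_{\rm in}^n,\ 1-\rho_{\rm out}^{-n}\,\right\}\in[0,1),
\]
where $\rho_{\rm in}$ and $\rho_{\rm out}$ denote the inradius and the circumradius of $\Omega$ (and $\rho_{\rm in}\le 1\le\rho_{\rm out}$, since $|\Omega|=\omega_n$). Fix a threshold $\eps_0=\eps_0(n)>0$, to be chosen. If $\Delta\lambda_1(\Omega)\ge\eps_0$, then \eqref{quantitative_v} holds with any $\bar c\le\eps_0$, because $d_{\mathcal M}(\Omega)^{2n}\le 1$. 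Otherwise $\Delta\lambda_1(\Omega)<\eps_0$, and then \eqref{quantitative_intro} forces $\mathcal{A}_F(\Omega)^2\le\bar C^{-1}\eps_0$, so that $\mathcal{A}_F(\Omega)$ is as small as we please.

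The heart of the matter is the following geometric lemma: there is a constant $c_n>0$, depending only on $n$, such that every open, bounded, convex $\Omega$ with $|\Omega|=\omega_n$ satisfies $\mathcal{A}_F(\Omega)\ge c_n\,d_{\mathcal M}(\Omega)^n$; by the displayed formula this amounts to the two estimates $\mathcal{A}_F(\Omega)\ge c_n(1-\rho_{\rm in}^n)^n$ and $\mathcal{A}_F(\Omega)\ge c_n(1-\rho_{\rm out}^{-n})^n$, and it suffices to prove them when the relevant defect is small, the complementary ranges being routine (there $\Omega$ is long and thin, hence $L^1$-far from every ball of volume $\omega_n$). Let $B_1(y)$ be an optimal ball for $\mathcal{A}_F(\Omega)$ and put $m:=|\Omega\setminus B_1(y)|=|B_1(y)\setminus\Omega|=\tfrac12\omega_n\mathcal{A}_F(\Omega)$. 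The key geometric input is that a convex subset of the unit ball of volume $\omega_n-m$ necessarily has inradius at least $1-\eps'(m)$, with $\eps'(m)\to0$ as $m\to0$ (the extremal configuration being the intersection of the ball with a symmetric slab, of volume $\omega_n-c_n(1-r)^{(n+1)/2}$ when its inradius is $r$); applied to $\Omega\cap B_1(y)$ this produces a ball $B_{1-\eps'}(z')\subseteq\Omega\cap B_1(y)$ with $|z'-y|\le\eps'$. For the circumradius estimate: if $\rho_{\rm out}(\Omega)=1+s$, then $\Omega$ is not contained in $B_{1+s/2}(y)$, so it contains a point $p$ with $|p-y|>1+s/2$; since $\Omega$ contains the convex hull of $B_{1-\eps'}(z')$ and $\{p\}$, hence for every $t\in[0,1]$ the ball of radius $(1-t)(1-\eps')$ centred at $(1-t)z'+tp$, a choice of $t$ close to $1$ exhibits a ball of radius comparable to $s$ contained in $\Omega$ and disjoint from $B_1(y)$; therefore $m\ge c_n s^n$, which gives $\mathcal{A}_F(\Omega)\ge c_n(1-\rho_{\rm out}^{-n})^n$. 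For the inradius estimate: $\Omega\cap B_1(y)\subseteq\Omega$ has inradius $\le\rho_{\rm in}(\Omega)=1-s'$, so the extremal-volume estimate above forces $m\ge c_n(s')^{(n+1)/2}$, which yields $\mathcal{A}_F(\Omega)\ge c_n(1-\rho_{\rm in}^n)^n$.

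Granting the lemma, in the regime $\Delta\lambda_1(\Omega)<\eps_0$ one obtains $d_{\mathcal M}(\Omega)^{2n}\le c_n^{-2}\mathcal{A}_F(\Omega)^2\le c_n^{-2}\bar C^{-1}\Delta\lambda_1(\Omega)$, so that \eqref{quantitative_v} holds with $\bar c:=\min\{\eps_0,\,\bar C c_n^2\}$, a constant depending only on $n$. (An alternative route, closer to Melas's original argument in \cite{Me}, bypasses \eqref{quantitative_intro} and works directly with the first eigenfunction of $\Omega$, exploiting the monotonicity of $\lambda_1$ under inclusion together with convexity to build competitors that force $\lambda_1(\Omega)$ away from $\lambda_1(B)$ unless $\Omega$ is close to a ball both from inside and from outside.) The main obstacle is the geometric lemma and, inside it, the extremal-volume estimate for convex subsets of a ball with prescribed inradius: extracting an honest, even non-sharp, power of the defect is where the real work lies, while the dichotomy and the spectral input are routine.
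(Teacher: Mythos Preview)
The paper does not supply a proof of this theorem; it is recalled as background and attributed to Melas \cite{Me} (with pointers to \cite{brasco_survey} and \cite{FMP1}). So there is no in-paper argument to compare against. Your route---deduce Melas from the sharp Brasco--De~Philippis--Velichkov inequality \eqref{quantitative_intro} together with a purely geometric bound $\mathcal{A}_F(\Omega)\ge c_n\,d_{\mathcal M}(\Omega)^n$ for convex bodies---is quite different from Melas's original 1992 proof, which predates \cite{BDPV} by two decades and works directly with the first eigenfunction and convexity (you yourself mention this as the ``alternative route''). Logically there is no circularity, since \cite{BDPV} does not rely on \cite{Me}; but one should be aware that the approach is, in this sense, a posteriori.

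That said, the proposal is not a complete proof: the ``key geometric input''---that a convex subset $K\subset B_1$ with $|K|=\omega_n-m$ has inradius at least $1-c_n\,m^{2/(n+1)}$---is asserted and declared extremal for slabs, but not established. You explicitly flag this as ``where the real work lies'', and indeed without it the inradius half of the lemma does not close (the one-hyperplane argument you allude to can degenerate when the supporting hyperplane tangent to the inball is also tangent to $B_1$; one has to use that the contact normals of a maximal inball are not contained in an open half-space). The circumradius half, via the cone over $B_{1-\eps'}(z')$ with apex $p$, is fine once the inradius input gives you $z'$. A quantitative comparison of this type between $\mathcal{A}_F$ and Hausdorff-type asymmetries for convex sets is available in \cite{EFT} (cf.\ also the discussion around \eqref{Melas_Haus} in the paper), so the gap can be closed by citation; but as written, the proposal is an outline with the decisive estimate left open.
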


From \cite{brasco_survey} and \cite{EFT},  we have that 
\begin{equation}\label{Melas_Haus}
\lambda_1(\Omega)-\lambda_1(B)\geq M \mathcal{A}_H(\Omega)^{2n^2},
\end{equation}
with $M$ positive constant dependending only by the dimension
 %In \cite{brasco_survey} and \cite{EFT}, we find the relation between the asymmetry introduced by Melas and the Hausdorf asymmetry \eqref{hausdorf_def}:
 %\begin{equation*}
%d_{\mathcal{M}}(\Omega)\geq a_n \mathcal{A}_H(\Omega)^n,
 %\end{equation*}
%with $a_n$ positive constant, depending only by the dimension. Consequently, \eqref{melly} in terms of the Hausdorff deviation becomes
%\begin{equation}\label{Melas_Haus}
%	\lambda_1(\Omega)-\lambda_1(B)\geq \bar{C} \mathcal{A}_H(\Omega)^{2n^2}.
%\end{equation}

Finally, we recall the following lower bound of the first Dirichlet eigenvalue in term of the perimeter, proved in \cite{brasco}. % and that is sharp on the class of collapsing pyramids.
\begin{thm}
	Let $n\geq 2$.	There exists a constant $K>0$, depending only on $n$, such that, for every open set $\Omega$ with $|\Omega|=\omega_n$, it holds
	\begin{equation}\label{brasco_thm}
	\lambda_1(\Omega)\geq K P(\Omega)^2
	\end{equation}
	and the exponent $2$ is sharp.
\end{thm}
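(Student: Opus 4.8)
The plan is to control $\lambda_1(\Omega)$ from below by a geometric quantity that can in turn be controlled by the perimeter, passing through the \emph{minimal width} $w(\Omega)$ (the least distance between two parallel supporting hyperplanes of $\Omega$) and the \emph{inradius} $\rho(\Omega)$ (the radius of the largest ball contained in $\Omega$), via the chain of estimates
$$
\lambda_1(\Omega)\ge \pi^2\,w(\Omega)^{-2},\qquad w(\Omega)\le s_n\,\rho(\Omega),\qquad \rho(\Omega)\le n\,\omega_n\,P(\Omega)^{-1}.
$$
Convexity enters here in an essential way: for a general open set one may attach a long, thin tentacle (or add a far-away thin tube) to a fixed ball, which leaves $\lambda_1$ bounded while sending $P$ to $+\infty$, so no such inequality can survive outside a class like $\mathcal{C}_n$.

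\textbf{Step 1: spectrum versus width.} Choosing a direction $e\in\mathbb{S}^{n-1}$ that realizes the minimal width, $\Omega$ is contained in a slab $\{x : a<x\cdot e<a+w(\Omega)\}$. For $u\in H^1_0(\Omega)$, slicing along $e$ and applying the sharp one-dimensional Poincar\'e inequality on an interval of length $w(\Omega)$ (whose optimal constant is $\pi^2/w(\Omega)^2$, the first Dirichlet eigenvalue of the interval) gives
\begin{equation*}
\int_\Omega u^2\,dx\le \frac{w(\Omega)^2}{\pi^2}\int_\Omega|\nabla u|^2\,dx,
\end{equation*}
hence $\lambda_1(\Omega)\ge \pi^2/w(\Omega)^2$.

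\textbf{Step 2: width versus inradius, and inradius versus perimeter.} By Steinhagen's inequality for convex bodies there is a constant $s_n>0$, depending only on $n$, such that $w(\Omega)\le s_n\,\rho(\Omega)$; combined with Step 1 this yields $\lambda_1(\Omega)\ge \pi^2/(s_n^2\,\rho(\Omega)^2)$. To bound $\rho(\Omega)$, translate $\Omega$ so that the largest inscribed ball $B_{\rho(\Omega)}$ is centered at the origin. For $\mathcal{H}^{n-1}$-a.e.\ $x\in\partial\Omega$ the outer unit normal $\nu(x)$ satisfies $x\cdot\nu(x)\ge\rho(\Omega)$, since the supporting hyperplane of $\Omega$ at $x$ cannot cross the interior of the inscribed ball. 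Applying the divergence theorem to the field $x\mapsto x$, whose divergence equals $n$,
\begin{equation*}
n\,\omega_n=n\,|\Omega|=\int_{\partial\Omega}x\cdot\nu(x)\,d\mathcal{H}^{n-1}(x)\ge \rho(\Omega)\,P(\Omega),
\end{equation*}
so $\rho(\Omega)\le n\,\omega_n/P(\Omega)$. Chaining the two bounds gives $\lambda_1(\Omega)\ge \dfrac{\pi^2}{s_n^2\,n^2\,\omega_n^2}\,P(\Omega)^2$, which is \eqref{brasco_thm} with $K=\pi^2/(s_n^2 n^2\omega_n^2)$.

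\textbf{Sharpness and the main difficulty.} For the optimality of the exponent $2$ I would test on the thin boxes $Q_\varepsilon=(0,\ell_\varepsilon)^{n-1}\times(0,\varepsilon)$ with $\ell_\varepsilon^{\,n-1}\varepsilon=\omega_n$: then $\lambda_1(Q_\varepsilon)=\pi^2\bigl((n-1)\ell_\varepsilon^{-2}+\varepsilon^{-2}\bigr)=\pi^2\varepsilon^{-2}(1+o(1))$ and $P(Q_\varepsilon)=2\omega_n\varepsilon^{-1}(1+o(1))$ as $\varepsilon\to0$, so $\lambda_1(Q_\varepsilon)/P(Q_\varepsilon)^{\alpha}\to0$ for every $\alpha>2$; since moreover $\lambda_1(\Omega)\ge\lambda_1(B)>0$ and $P(\Omega)\ge P(B)>0$ for every admissible $\Omega$, no exponent larger than $2$ can work. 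The soft parts are the spectral estimate of Step 1 and the divergence-theorem bound for the inradius; the genuine bridge — and hence the main obstacle — is the comparison of minimal width with inradius, which is precisely where convexity (through Steinhagen's inequality) does the work, and without which the whole strategy collapses at the very first link.
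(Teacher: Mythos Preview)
The paper does not actually prove this theorem: it is quoted in Section~2.3 as a background result from \cite{brasco}, with no argument given. So there is nothing in the paper to compare your proof against directly.

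That said, your proof is correct and is very much in the spirit of the cited reference, whose title already signals that the inradius is the pivot. Two remarks. First, you rightly flag that the hypothesis ``every open set'' in the statement as printed is too generous: without convexity the inequality fails (your tentacle example), and both Steinhagen's inequality in Step~2 and the pointwise bound $x\cdot\nu(x)\ge\rho(\Omega)$ in Step~3 use convexity in an essential way. The paper's use of this theorem is only for $\Omega\in\mathcal C_n$, so no harm is done, but the statement is sloppy. Second, your route $\lambda_1\ge\pi^2/w^2$ followed by Steinhagen $w\le s_n\rho$ is a slight detour: one can bound $\lambda_1$ directly below by a dimensional multiple of $\rho(\Omega)^{-2}$ for convex $\Omega$ (Hersch--Protter type estimates, via the distance function or interior parallel sets), which is the line taken in \cite{brasco}. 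Your path is perfectly valid and perhaps more transparent, at the cost of a worse implicit constant; since only the existence of some $K=K(n)$ is claimed, this is immaterial. The sharpness argument on thin boxes is clean and correct.
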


\section{A sharp estimate of the Fraenkel asymmetry in terms of the perimeter}
This Section is dedicated to the proof of the following  Proposition, that is the hearth of the present paper. 
\begin{prop}\label{first_step} 
	Let $n\geq 2$.	There exist two  constants   $ \delta>0$ and $C>0$ depending only on $n$,  such that, for every $\Omega\in\mathcal{C}_n$ with  $\Delta P(\Omega)< \delta$,  it holds
	\begin{equation}\label{first_ineq}
	\mathcal{A}_F(\Omega)\geq C \left(  P(\Omega)-P(B) \right).
	\end{equation}
\end{prop}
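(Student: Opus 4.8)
The plan is to reduce the inequality $\mathcal{A}_F(\Omega)\geq C\,\Delta P(\Omega)$ to a comparison between the Fraenkel asymmetry and the Hausdorff asymmetry, and then to bound the Hausdorff asymmetry from below by the isoperimetric deficit. The starting observation is that for convex sets these two asymmetries are not independent: when $\Delta P(\Omega)$ is small, $\Omega$ is close to a ball in a quantitative sense, and for sets that are already nearly spherical the $L^1$-type Fraenkel asymmetry controls the $L^\infty$-type Hausdorff asymmetry from below — intuitively, a convex set that sticks out far from the optimal ball in one direction (large $\mathcal{A}_\mathcal{H}$) must, because of convexity, stick out over a whole cap of definite solid angle, hence have definite $\mathcal{A}_F$. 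So I would first prove (or invoke, via the representation \eqref{asi_convex}, \eqref{still_long} together with convexity of $u$, i.e. $u''+u\geq 0$) an estimate of the form $\mathcal{A}_F(\Omega)\geq c_1\,\mathcal{A}_\mathcal{H}(\Omega)^{\gamma}$ for convex $\Omega$ near the ball, for some explicit $\gamma\geq 1$ (in the plane $\gamma=1$ should already follow from the geometry of the gauge function; in higher dimensions one may lose an exponent, which is harmless since $\mathcal{A}_\mathcal{H}$ is bounded on the class after we know $\Delta P$ is small).

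Next I would establish a lower bound for the Hausdorff asymmetry in terms of the isoperimetric deficit, $\mathcal{A}_\mathcal{H}(\Omega)\geq c_2\,\Delta P(\Omega)$, valid when $\Delta P(\Omega)$ is small. This is precisely the content — up to rescaling — of Proposition \ref{per_Haus}: there $d_{\mathcal{H}}(\Omega,B)\geq K\,P(\Omega)^{(2-n)/(n-1)}\,\Delta P(\Omega)$, and since $\Delta P(\Omega)<\delta$ forces $P(\Omega)$ to lie in a bounded interval (bounded above because of the isoperimetric inequality combined with the volume constraint and, say, \eqref{brasco_thm}, and below by $P(B)$), the factor $P(\Omega)^{(2-n)/(n-1)}$ is bounded below by a dimensional constant. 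One must also pass from $d_{\mathcal{H}}(\Omega,B)$ to the centered/normalized quantity $\mathcal{A}_\mathcal{H}(\Omega)$: since the optimal ball in \eqref{hausdorf_def} has radius $R=1$ (volume constraint $|\Omega|=\omega_n$), and translating to the optimal center only decreases $d_{\mathcal{H}}$, we get $\mathcal{A}_\mathcal{H}(\Omega)\leq d_{\mathcal{H}}(\Omega,B)$; for the lower bound on $\mathcal{A}_\mathcal{H}$ one reruns the argument of Proposition \ref{per_Haus} with $\Omega$ recentered at its Hausdorff-optimal center, which does not affect the mean width or the perimeter, so the same chain of inequalities applies verbatim. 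In dimension $2$ this step is even cleaner via Remark \ref{Haus_per2 }: $d_{\mathcal{H}}(\Omega,B)\geq \frac{1}{2\pi}\Delta P(\Omega)$ directly.

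Combining the two bounds gives $\mathcal{A}_F(\Omega)\geq c_1 c_2^{\gamma}\,\Delta P(\Omega)^{\gamma}$, and if $\gamma>1$ one upgrades the exponent to $1$ using that $\Delta P(\Omega)<\delta$ is small, at the cost of a worse constant: $\Delta P(\Omega)^{\gamma}=\Delta P(\Omega)\cdot\Delta P(\Omega)^{\gamma-1}\geq \delta^{\,?}\cdot\Delta P(\Omega)$ is the wrong direction, so instead I would argue that either $\Delta P(\Omega)$ is bounded below (in which case $\mathcal{A}_F$ is bounded below by a separate compactness argument — no convex set with $\mathcal{A}_F(\Omega)=0$ has positive deficit) or $\gamma=1$ is attainable directly; the honest route, and the one I expect the paper takes, is to prove the sharp linear bound $\mathcal{A}_F(\Omega)\geq c_1\,\mathcal{A}_\mathcal{H}(\Omega)$ for convex sets close to the ball, which is plausible because for a convex perturbation of the ball the region where $r>1$ and the region where $r<1$ are each "fat" caps, so the $L^1$ norm of $r^n-1$ is comparable to its sup norm. \textbf{The main obstacle} I anticipate is exactly this last geometric lemma: showing that for convex $\Omega$ near $B$ the $L^1$ deviation of the radial function genuinely controls the $L^\infty$ deviation, without losing an exponent, since naively $\|f\|_{L^1}$ only controls $\|f\|_{L^\infty}$ when one has a one-sided bound on, say, the Lipschitz constant — here supplied by convexity ($u''+u\geq0$), but turning this ODE inequality into the desired cap estimate uniformly over the class is the delicate point, particularly in handling the $n\geq 3$ case where the constraint on $r$ is a second-order differential inequality on the sphere rather than an interval.
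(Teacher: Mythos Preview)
Your plan has a genuine gap: the linear bound $\mathcal{A}_F(\Omega)\geq c_1\,\mathcal{A}_\mathcal{H}(\Omega)$ is \emph{false} for convex sets arbitrarily close to the ball, so the ``honest route'' you hope for cannot succeed. A single-spike example shows this: take the convex hull of the unit disc and one point at distance $1+h$ from the centre (then rescale to fix the area). A direct computation gives $\mathcal{A}_\mathcal{H}\sim h$, while the excess region outside the optimal ball is a thin circular segment of angular width $\theta\sim\sqrt{2h}$ and area $\sim \theta^3/3\sim h^{3/2}$, so $\mathcal{A}_F\sim h^{3/2}$. Hence the best possible exponent in step (a) is $\gamma=3/2$ (in the plane; it is $(n+1)/2$ in general), and combining with Proposition~\ref{per_Haus} only yields $\mathcal{A}_F\geq c\,(\Delta P)^{3/2}$. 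For the regular $k$-gon, where $\Delta P\sim\mathcal{A}_F\sim\mathcal{A}_\mathcal{H}\sim k^{-2}$, this loses exactly the exponent you need, and as you correctly note there is no way to upgrade $(\Delta P)^{3/2}$ to $\Delta P$ in the small-deficit regime. Factoring through the $L^\infty$ quantity $\mathcal{A}_\mathcal{H}$ is therefore too crude: the single-spike set saturates (a) while (b) is far from sharp for it, and the polygon saturates (b) while (a) is far from sharp --- the slack never cancels.

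The paper avoids $\mathcal{A}_\mathcal{H}$ altogether. Writing the radial function $r=1/u$ of a nearly spherical $\Omega$, one has $\mathcal{A}_F(\Omega)\sim\int_{\mathbb{S}^{n-1}}|r-1|$ and, after expanding \eqref{peri_convex}, $\Delta P(\Omega)\sim\int_{\mathbb{S}^{n-1}}|\nabla_\tau r|^2$ up to terms that are absorbed by the volume constraint. So the task becomes the purely analytic inequality
\[
\int_{\mathbb{S}^{n-1}}|r-1|\,d\mathcal{H}^{n-1}\;\geq\; C\int_{\mathbb{S}^{n-1}}|\nabla_\tau r|^2\,d\mathcal{H}^{n-1}
\]
for radial functions of convex bodies close to $B$. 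In the plane the paper reduces this to $\int_0^{2\pi}|1-u|\,d\theta\geq C\int_0^{2\pi}(u')^2\,d\theta$ under the convexity constraint $u''+u\geq 0$, proves it first for polygons by an explicit case analysis on each edge (tangent, secant, external, internal), and passes to general convex sets by Hausdorff approximation. The higher-dimensional case is then obtained by slicing with great circles and integrating the planar inequality (a Fubini argument on $\mathbb{S}^{n-1}$). The moral is that $\Delta P$ in the nearly spherical regime is an $L^2$ gradient quantity, not an $L^\infty$ one, and it is this $L^1$--versus--$L^2$ comparison (with convexity supplying the needed one-sided control on $u''$) that gives the sharp linear exponent.
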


 First of all we prove inequality  \eqref{first_ineq} in the planar case and after we generalize this result in all dimensions. 
 
 Moreover, we have that  inequality \eqref{first_step} is sharp, in the sense that the exponent of the Fraenkel asymmetry in \eqref{first_ineq} cannot be improved, as it is pointed out in the following Remark.
 
 \begin{rem}\label{sharp}
 We show  that 
 	$$ \mathcal{A}_{\mathcal{F}}(\mathcal{P}^*_k)\sim\left(P(\mathcal{P}^*_k)-P(B) \right),$$
 	when $\mathcal{P}_k^*$ is a regular $k$-gon of area $1$. Indeed, the  following relations hold:
 	\begin{equation*}
 	\mathcal{A}_F(\mathcal{P}^*_k)\simeq\epsilon^2, \qquad\qquad P(\mathcal{P}^*_k)-P(B)\simeq\epsilon^2,
 	\end{equation*}
 	where $\epsilon=\pi/k$ . Being $B$ the ball of area  $1$, if we set  $R$  the radius of $B$, we have that $R=1/\sqrt{\pi}$.  
 	Let us denote by  $a$  the apothem of $\mathcal{P}_k^*$, i.e. the segment  from the center of the polygon that is perpendicular to one of its sides.
 	Setting and Taylor expanding, we obtain that 
 	\begin{equation}\label{apo}
 	a=\dfrac{\left(1-\sin^2(\pi/k)  \right)^{1/4}}{\sqrt{k \sin(\pi/k)}}\simeq\dfrac{\left(1-\epsilon^2\right)^{1/4}}{\sqrt{\pi}}\simeq \dfrac{1-\epsilon^2/4}{\sqrt{\pi}}.
 	\end{equation}
 	Now the area $A_{\gamma}$ of the circular segment of angle $2\gamma$, that is the region of $B$  which is cut off from the rest by one edge of the polygon, 
 	$$A_\gamma=\frac{1}{2\pi}(\gamma-\sin(\gamma)) \simeq \frac{\sqrt{2}}{\pi}\epsilon^3,$$ 
 	since $\gamma=\arccos(a\sqrt{\pi})\simeq\sqrt{2}\epsilon.$
 	Thus,
 	\begin{equation}
 	\mathcal{A}_F(\mathcal{P}^*_k)\simeq2\sqrt{2\pi}\epsilon^2.
 	\end{equation}
 	Using \eqref{apo}  we obtain
 	\begin{equation*}
 	P(\mathcal{P}^*_k)-P(B)=\dfrac{2}{a}-2\sqrt{\pi}\simeq \frac{\sqrt{\pi}}{2}\epsilon^2.
 	\end{equation*}
 	On the other hand, sets that are smooth without edges do not create problems. If we consider for example  the family of ellipses 
 	$$E_\epsilon=\Big\{  (x,y)\:|\:x^2+\dfrac{y^2}{(1+\epsilon)^2}=1 \Big\} $$
 	we have that  (see the computations in  \cite{BDR})
 	$$ \mathcal{A}_{\mathcal{F}}(E_\epsilon)\sim\left(P(E_\epsilon)-P(B) \right)^{1/2},$$
 	being
 	\begin{equation*}
 	P(E_\epsilon)-P(B)\simeq\epsilon^2,\qquad\qquad   \mathcal{A}_{\mathcal{F}}(E_\epsilon)=O(\epsilon). 
 	\end{equation*}
 \end{rem}

\subsection{Proof of Proposition \ref{first_step} in the planar case}\label{planny}

Let us consider the case $n=2$ and let $\Omega\in\mathcal{C}_2$.  We denote by $B(x_\infty)$ the unit ball centered at the point $x_{\infty}$ that realizes the minimum in \eqref{hausdorf_def} and by $B(x_1)$ the unit ball centered at the point $x_1$ that realizes the minimum in \eqref{fraenkel_def}.  Without loss of generality, since convex sets which agree up to rigid motions (translations and rotations) can be systematically identified,  we can assume that $O\in\Omega$ and that $x_1$ coincides with the origin $O$.   We denote by $r$ the polar fuction centered at the origin as defined in  \eqref{boundary} and by $r_\infty$
  the polar function associated to $\Omega$ centered at $x_\infty$ (we can assume that $x_1,x_\infty\in \Omega$).

Let us fix $\varepsilon>0$. Using the result cointained in \cite{gelli} (Lemma $2.11$), we have that  there exists $\delta_1>0$, such that, if $\Delta P(\Omega)<\delta_1$, then  
\begin{equation}\label{fusco}
d_{\mathcal{H}}(B(x_1), B(x_\infty))< \varepsilon.
\end{equation}
Moreover,  there exists $\delta_0>0$ such that $\delta_0<\delta_1^{1/2}$ and such that, if $\mathcal{A}_H(\Omega)<\delta_0$, then  $||r_\infty-1||_{\infty}<\varepsilon$.  Now, let us assume that $|| 1-r||_{\infty}\geq|| 1-r_\infty||_\infty$ (otherwise we have that $|| 1-r||_{\infty}<\varepsilon$). In this case, from \eqref{fusco}, we have
\begin{equation}\label{small}
	|| 1-r||_{\infty}\leq || 1-r_\infty||_\infty+d_{\mathcal{H}}(B(x_1), B(x_\infty))< 2\varepsilon.
\end{equation}
Choosing $\delta=\delta_0^2$, we have that, if $\Delta P(\Omega)<\delta$, then $||1-r||_\infty<2 \varepsilon$  (Theorem $1.1.$ in \cite{gelli}).
% result contained in Fusco haussdorf 

%As a consequence, we have that , for every $\epsilon_1>0$, there exists $\delta_0>0$  sufficiently small, such that, if $\mathcal{A}_H(\Omega)< \delta_o$, then $||u-1||_{L^{\infty}(\mathbb{S}^{n-1})}\leq \epsilon_1$,  where $u$ is the gauge function associated to $\Omega$, as defined in  \eqref{gauge}.

Now,  recalling the definiton of gauge function given in \eqref{gauge},   the formulas \eqref{per_vol} \eqref{still_long} and using \eqref{small}, we have that \eqref{first_step} follows from 
\begin{equation}\label{long}
\int_{0}^{2\pi}|1-u|(1+u)\;d\theta=\int_{0}^{2\pi} |1-u^2(\theta)|\;d\theta\geq C\int_{0}^{2\pi}\left(   u'^2(\theta)+  u^2(\theta)-u^4(\theta)\right)\;d\theta, 
\end{equation} 
after a rationalization of the denominator and after taking care of adjusting the constant in \eqref{still_long}, that for convenience we still call $C$. 
Moreover, \eqref{long} follows from 
\begin{equation}\label{ineqq}
\ds\int_{0}^{2\pi} |1-u(\theta)|\;d\theta\geq C\ds\int_{0}^{2\pi}\left(u'(\theta)\right)^2d\theta.
\end{equation}

Our strategy to prove \eqref{first_step} is to prove  \eqref{ineq} for  a suitable  class of polygons  $\mathcal{P}$  near the unit  ball centerd at the origin and, then, we can conclude that  \eqref{ineqq} is true for every $\Omega$ such that  $\Delta P(\Omega)\leq \delta$ by a standard density argument in the Hausdorff metric.   More precisely, let us consider $\Omega$   such that  $\Delta P(\Omega)\leq \delta$. Then, for every $n\in \mathbb{N}$ with $n>\tilde{n}$ big enough, there exists a polygon  $\mathcal{P}_n$ such that 
  $d_\mathcal{H}(\Omega, \mathcal{P}_n)<1/n$  (see Theorem $1.8.16$ in \cite{S}) and such that 
\eqref{ineqq} holds for the gauge function $u_n$ associated to $\mathcal{P}_n$ (that is the claim of the following Lemma).   Then, we can conclude the proof of Proposition \ref{first_step}, since $\sup |u_n-u|\to 0$ (see  Lemma $2.1$ in \cite{Chen}).
So, it remains to prove only the following

\begin{lem}
There exists $\bar{n}\in\mathbb{N}$ and $C>0$ such that, for every $\mathcal{P}\in \mathcal{C}_2$  polygon  such that  $d_{\mathcal{H}}(\mathcal{P}, B)<1/n$ for  $n\geq \bar{n}$,  we have 
\begin{equation}\label{ineq} 
\ds\int_{0}^{2\pi} |1-u(\theta)|\;d\theta\geq C\ds\int_{0}^{2\pi}\left(u'(\theta)\right)^2d\theta,
	\end{equation}
where	$u$ is the gauge function associated  to $\mathcal{P}$.
\end{lem}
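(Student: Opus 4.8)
The plan is to work with a polygon $\mathcal{P}$ that is Hausdorff-close to the unit ball $B(O)$ and to reduce \eqref{ineq} to a statement about the vertices of $\mathcal{P}$. Write $\mathcal{P}$ with vertices at polar angles $0=\theta_0<\theta_1<\dots<\theta_m=2\pi$ (indices mod $m$), and let $v_i$ be the value of the gauge function $u$ at $\theta_i$; since $d_{\mathcal{H}}(\mathcal{P},B)<1/n$ we have $|v_i-1|\le \sigma$ uniformly, with $\sigma=\sigma(n)\to 0$. On each arc $[\theta_{i},\theta_{i+1}]$ the boundary of $\mathcal{P}$ is a straight segment, so $r(\theta)=1/u(\theta)$ is a chord of that segment and $u$ is of the very special form $u(\theta)=p_i\cos\theta+q_i\sin\theta$ on that arc (the support-line representation: a line $\{x\cdot \xi = 1\}$ gives gauge function linear in $(\cos\theta,\sin\theta)$). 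In particular $u$ is real-analytic on each open arc and $u''+u=0$ there, which already controls $u'$ in terms of the boundary values $v_i,v_{i+1}$ and the arc length $\ell_i=\theta_{i+1}-\theta_i$.

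First I would make these local estimates explicit. On the arc $[\theta_i,\theta_{i+1}]$, writing $u(\theta)=A_i\cos(\theta-\phi_i)$ with $A_i=\max u\approx 1$ (up to $O(\sigma)$) and the arc short ($\ell_i\to 0$ as $n\to\infty$, by convexity and Hausdorff-closeness — a fan of $m$ arcs covering $2\pi$ with uniformly bounded $u'$ forces $\max_i \ell_i\to 0$), one gets on that arc
\begin{align*}
\int_{\theta_i}^{\theta_{i+1}} (u')^2\,d\theta &\le C_1\,\frac{(v_{i+1}-v_i)^2}{\ell_i} + C_1\,\sigma^2\,\ell_i,\\
\int_{\theta_i}^{\theta_{i+1}} |1-u|\,d\theta &\ge \frac{\ell_i}{2}\bigl(|1-v_i|+|1-v_{i+1}|\bigr) - C_2\,\sigma\,\ell_i^3,
\end{align*}
the first because $u'' = -u$ makes $u$ concave-ish and $u'$ essentially the difference quotient plus a curvature correction of size $\ell_i$, the second because $u$ itself deviates from the secant line through $(v_i,v_{i+1})$ by $O(\sigma\ell_i^2)$. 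Summing over $i$, \eqref{ineq} reduces to the discrete inequality
\[
\sum_i \ell_i\bigl(|1-v_i|+|1-v_{i+1}|\bigr)\ \ge\ C\sum_i \frac{(v_{i+1}-v_i)^2}{\ell_i},
\]
after absorbing the $\sigma$-error terms (this is where smallness of $\Delta P(\Omega)$, equivalently smallness of $\sigma$, is used: the error terms are $O(\sigma)$ times quantities already controlled, so they can be hidden on the left for $n$ large, i.e. $\sigma$ small).

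The remaining discrete inequality I would prove by a telescoping/Cauchy–Schwarz argument exploiting that $\sum_i \ell_i = 2\pi$ and that the $v_i$ are all within $\sigma$ of $1$. Set $w_i = v_i - 1$, so $|w_i|\le \sigma$. For any index $j$, $|w_j| = |\sum_{i} (\text{signed increments up to }j)| $; more usefully, for any two indices $j,k$, $|w_j - w_k|\le \sum_i |w_{i+1}-w_i|$, and by Cauchy–Schwarz $\sum_i |w_{i+1}-w_i| = \sum_i \sqrt{\ell_i}\cdot \frac{|w_{i+1}-w_i|}{\sqrt{\ell_i}} \le \sqrt{2\pi}\,\bigl(\sum_i \frac{(w_{i+1}-w_i)^2}{\ell_i}\bigr)^{1/2}$. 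The point is then to bound the right side $\sum_i \frac{(w_{i+1}-w_i)^2}{\ell_i}$ in terms of the left side $\sum_i \ell_i(|w_i|+|w_{i+1}|)$: because $u$ is the gauge function of a convex set with $|\Omega|=\omega_n$, the mean of $r^2=1/u^2$ over $[0,2\pi]$ equals $2\pi$ (area normalization), which forces $\int_0^{2\pi} w\,d\theta$ to be a controlled second-order quantity, so $w$ cannot be one-signed of size comparable to $\|w'\|_{L^2}^2$; combining this ``zero-average-up-to-second-order'' constraint with the Cauchy–Schwarz bound above closes the loop and yields the constant $C$ depending only on $2\pi$.

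The main obstacle I anticipate is exactly this last closing step: a naive Poincaré/Wirtinger inequality gives $\int (u')^2 \gtrsim \int (u-\bar u)^2$, which is the wrong direction — here we need $\int|1-u|$ (an $L^1$, first-power quantity) to dominate $\int(u')^2$ (a squared quantity), which is false without using (a) that $u$ is close to $1$ so that $|1-u|$ and $(1-u)^2/\sigma$ are comparable, and (b) the volume constraint to prevent the cheap counterexample $u\equiv 1+\sigma$ (excluded since its area is wrong) and more subtle near-constant perturbations. Getting the volume constraint to interact correctly with the discrete sum — rather than just with $\int(u-1)$, which by itself is too weak — is the delicate point; I would handle it by expanding $1/u^2 = 1 - 2w + 3w^2 + O(w^3)$ so that the area identity reads $\int w\,d\theta = \tfrac{3}{2}\int w^2\,d\theta + O(\sigma\int w^2)$, and then use this to replace $\int|1-u|\gtrsim\tfrac1\sigma\int w^2$ by the sharper $\int|1-u| = \int|w| \ge \int w = \tfrac32\int w^2 + \dots$, which after Wirtinger applied to $w$ (now legitimately, since $\int w^2$ controls $\int (w')^2$ only in the presence of the closeness hypothesis bounding higher modes — no, rather: one shows directly $\int (w')^2 \le \tfrac{1}{c}\int w^2$ fails in general, so instead one uses the *convexity* constraint $u''+u\ge0$, i.e. $w'' + w \ge -1$, to get a one-sided bound on $w''$ and hence on $\int(w')^2$ via integration by parts $\int (w')^2 = -\int w w'' \le \int w(1+w)\ \le\ \int|w| + \int w^2$). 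That final chain — convexity gives $-w'' \le 1+w$, integrate against $w$, bound by $\int|1-u|$ — is, I expect, the cleanest route, and the polygonal reduction above is just the device that makes the integration by parts rigorous despite $u'$ having jumps at the vertices.
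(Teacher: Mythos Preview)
Your proposal has a genuine gap, and the regular $m$-gon (which asymptotically saturates the inequality, see Remark \ref{sharp}) exposes it in both of your suggested routes.

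\emph{The discrete reduction is based on a wrong local estimate.} On an arc of length $\ell_i$ the gauge function is $u(\theta)=A_i\cos(\theta-\phi_i)$ with $A_i\approx 1$, so $u''=-u\approx -1$ and $u'$ varies by $O(\ell_i)$ across the arc. A short computation (Taylor to second order) gives
\[
\int_{\theta_i}^{\theta_{i+1}}(u')^2\,d\theta \;\approx\; \frac{(v_{i+1}-v_i)^2}{\ell_i}\;+\;\frac{\ell_i^3}{12},
\]
so the curvature correction is $\ell_i^3$, not $\sigma^2\ell_i$. For the regular $m$-gon one has $v_i=v_{i+1}$, $\ell_i=2\pi/m$, and $\sigma\sim 1/m^2$, hence $\sigma^2\ell_i\sim m^{-5}$ while $\int_{\text{arc}}(u')^2\sim m^{-3}$: your upper bound misses the entire leading term. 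The discrete inequality you reduce to is therefore not equivalent to \eqref{ineq}; the $\ell_i^3$ contribution---which is precisely what the paper tracks---has been thrown away.

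\emph{The convexity/integration-by-parts route fails by a sign.} Writing $u''+u=\mu\ge 0$ (a sum of nonnegative Dirac masses at the vertices for a polygon) and $w=u-1$, the exact identity is
\[
\int_0^{2\pi}(w')^2\,d\theta \;=\; \int_0^{2\pi} w(1+w)\,d\theta \;-\;\int_0^{2\pi} w\,d\mu.
\]
Your step ``$-\int ww''\le \int w(1+w)$'' amounts to dropping $-\int w\,d\mu$, which is only legitimate if $w\ge 0$ on the support of $\mu$, i.e.\ if all vertices lie \emph{inside} the unit circle. For the regular $m$-gon of area $\pi$ the vertices lie \emph{outside} ($w(\theta_i)<0$), and in fact $\int w(1+w)=O(m^{-4})$ by the area constraint while $\int(w')^2\sim m^{-2}$: the term you drop is the dominant one, and the claimed bound $\int(w')^2\le \int|w|+\int w^2$ is false by two orders of magnitude.

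The paper proceeds differently: it proves the inequality \emph{locally on each side} $\overline{AB}$ of the polygon,
\[
\int_0^{\theta_0}|1-u|\,d\theta \;\ge\; C\int_0^{\theta_0}(u')^2\,d\theta,
\]
by an explicit case analysis on the position of $\overline{AB}$ relative to the unit circle (external tangent, external secant, external non-intersecting, internal), computing both sides via Taylor expansion in $\theta_0$ and comparing the leading $\theta_0^3$ coefficients. No global area constraint or discrete telescoping is used; the $\ell_i^3$ scale is exactly what appears on both sides and is matched case by case.
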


\begin{proof}
 In the following we are assuming that the segment  $\overline{AB}\subset\de \mathcal{P}$ is one side of the polygon $\mathcal{P}$ and we call $\theta_0$ the angle $A\hat{O}B$. %By  definition we have that $\theta_0\leq\bar{\theta}. $ 
 We denote by $P$ the point of intersection between the segment $\overline{AB}$ and the ray that forms with the segment $\overline{AO}$ an angle of amplitude $\theta$. We analyse now all the possible cases.\\
\textit{Case 1: External Tangent.}  We set $\alpha:=O\hat{A}B$ and we  assume that $\overline{OB}$ has length equal to $1$, that $\alpha+\theta_0=\pi/2$ and that the segment $\overline{OA}$ lies on the $y-$axis. Moreover, we set $h:=|\overline{AO}|-1$.

\begin{figure}[htpb]
	\centering
	\includegraphics[scale=0.90]{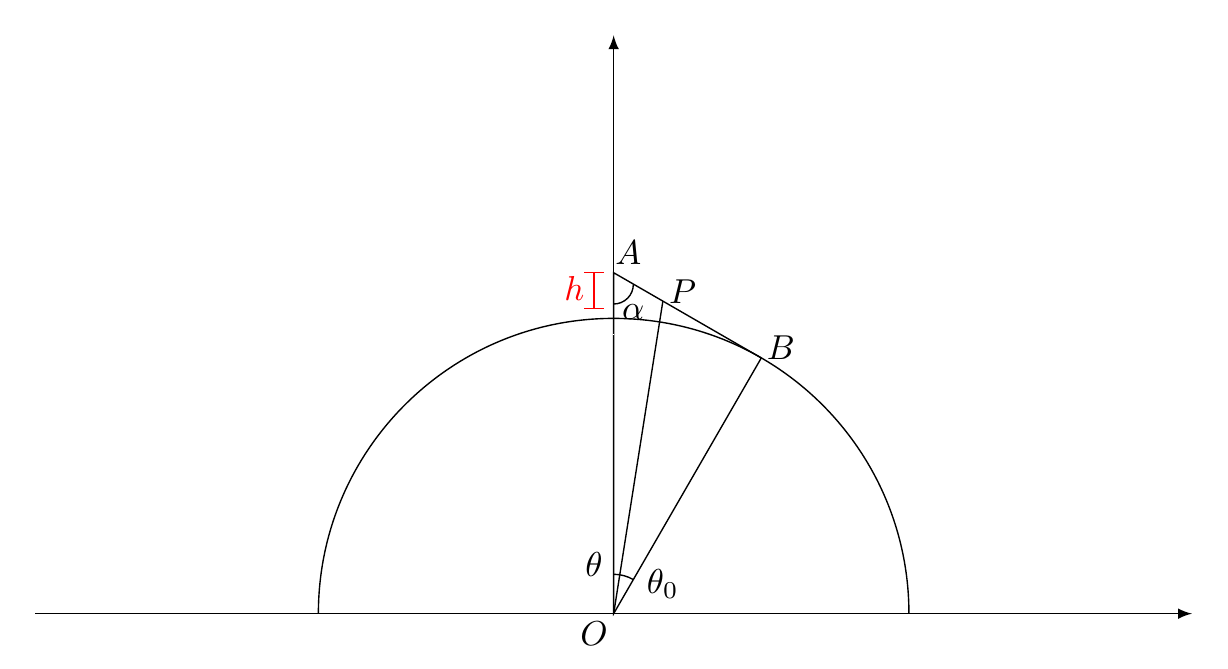}
	\caption{External tangent case}
	\label{fig1}
\end{figure}

We  have that 
\begin{equation}\label{1_sine}
h+1=\dfrac{1}{\cos(\theta_0)}.
\end{equation}
Let $\theta\in(0,\theta_0)$.
 Having $u(\theta)=1/r(\theta)$,  using the sine theorem to the triangle $OAP$, 
we obtain
\begin{equation}\label{sine_thm1}
	u(\theta)=\cos(\theta_0-\theta).
\end{equation}
and, consequently, 
\begin{equation}\label{derivative1}
	u'(\theta)=- \sin(\theta_0-\theta).
\end{equation}
Using \eqref{derivative1} and Taylor expanding up to the first order with respect to $\theta_0$, we have that

\begin{align}\label{second_term}
&\int_{0}^{\theta_0} \left( u'(\theta)\right)^2d\theta=\int_{0}^{\theta_0} \sin^2(\theta_0-\theta)\:d\theta= \left[ \frac{\theta_0}{2} -\dfrac{\sin(2\theta_0)}{4} \right]\approx  \,\frac{ 2\theta_0^3}{3}
\end{align}
and%, using \eqref{1_sine}, 
\begin{align}\label{first_term}
	\ds\int_{0}^{\theta_0}|1-u(\theta)| \;d\theta=	\ds\int_{0}^{\theta_0}\left(1-u(\theta)\right)d\theta=\ds\int_{0}^{\theta_0}\left( 1-\cos(\theta_0-\theta)  \right)d\theta=\\
	=\left(\theta_0-\sin(\theta_0)\right)\approx \dfrac{\theta_0^3}{6}.
\end{align}
So,  there exists a non negative constant $C$ such that
\begin{equation*}
	\ds\int_{0}^{\theta_0}|1-u(\theta)| \;d\theta\geq C\int_{0}^{\theta_0} \left( u'(\theta)\right)^2d\theta.
\end{equation*}\\
\textit{Case 2: External Secant.} We are now considering the case when $\alpha+\theta_0<\pi/2$, the segment  $\overline{OB}$ has length $1$ and the side of the polygon $\overline{AB}$ is external to the unitary ball  and touches it in the point $B$.

\begin{figure}[htpb]
	\centering
	\includegraphics[scale=0.90]{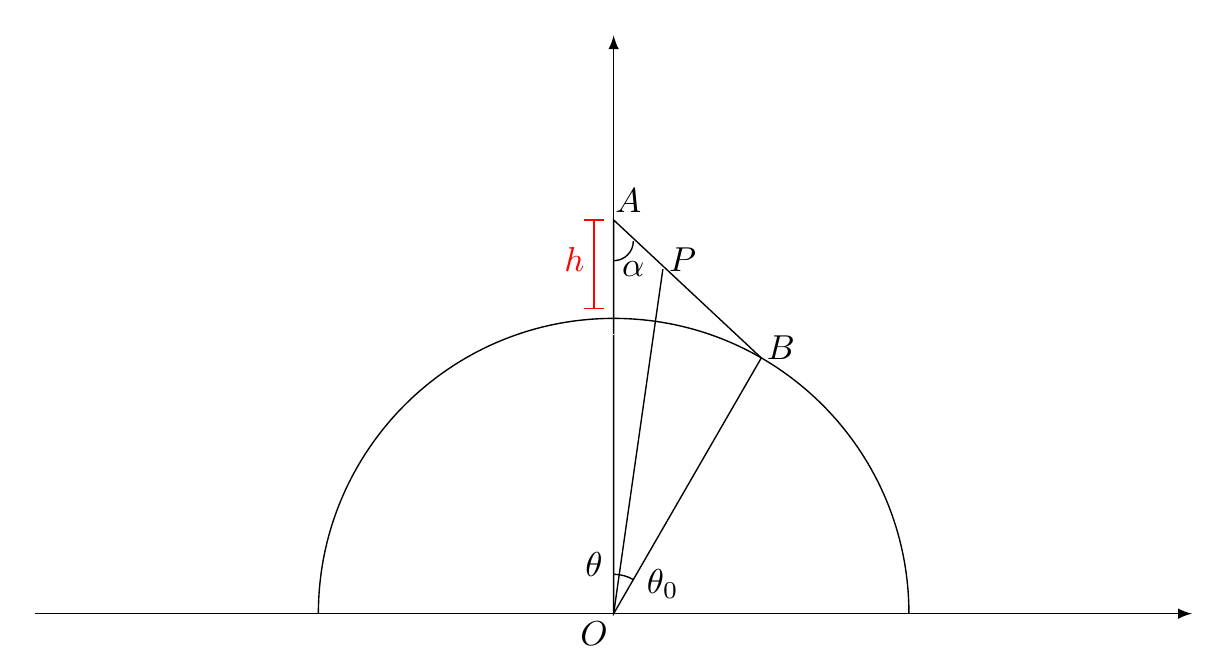}
	\caption{External Secant Case}
	\label{fig1}
\end{figure}
	We set $z=\pi/2-\alpha$ and in this case we have that 
	\begin{equation}\label{secondd}
h+1=\dfrac{\sin(\alpha+\theta_0)}{\sin(\alpha)}
	\end{equation}
	and,  using the sine theorem to the triangle $OAP$, 
	\begin{equation}\label{sine_thm}
	u(\theta)=\dfrac{\sin(\alpha+\theta)}{\sin(\alpha)(h+1)}
	\end{equation}
and so
	\begin{equation}\label{derivative}
	u'(\theta)=\dfrac{\cos(\alpha+\theta)}{\sin(\alpha)(h+1)}.
	\end{equation}
Denoting  by $I=1/\left( \sin(\alpha)(h+1)  \right)$ and Taylor expanding up to the first order with respect to $\theta_0$ and to  $z$, we have that

	\begin{align}\label{second_term}
	&\int_{0}^{\theta_0} \left( u'(\theta)\right)^2d\theta= I^2\left(\frac{\theta_0}{2} + \dfrac{\sin(2z-2\theta_0)}{4} -\dfrac{\sin (2z)}{4} \right)\simeq   \frac{I^2}{3} \left[3z^2\theta_0-3z\theta_0^2+\theta_0^3\right].
	\end{align}
	and, using \eqref{secondd}, 
	\begin{align*}\label{first_term}
	& \ds\int_{0}^{\theta_0}|1-u(\theta)| \;d\theta=	\ds\int_{0}^{\theta_0}\left(1-u(\theta)\right)d\theta=I\ds\int_{0}^{\theta_0}\left( 1-\dfrac{\cos(\theta_0-\theta)}{(h+1)\sin(\alpha)}  \right)d\theta=\\
	= & I\left[ \sin(\alpha)(h+1)\theta_0+\cos(\alpha+\theta_0)-\cos(\alpha)\right]= \\&=I \left[\theta_0\cos(\theta_0-z)+\sin(z)\left(\cos(\theta_0)-1\right)-\cos(z)\sin(\theta_0) \right] \simeq \dfrac{3I}{2} \left[   3z\theta_0^2-2\theta_0^3\right] . %\left[ -\theta_0\dfrac{(z-\theta_0)^2}{2}-\frac{z\theta_0^2}{2}+\frac{\theta_0^3}{6}+\frac{z^2\theta_0}{2}\right].
	\end{align*}
Since we can choose  $\theta_0 $ such that $z>(2/3)\theta_0$,  from the last two relations, we can conclude that there exists a non negative  constant $C$ such that
	\begin{equation*}
	\ds\int_{0}^{\theta_0}|1-u(\theta)| \;d\theta\geq C\int_{0}^{\theta_0} \left( u'(\theta)\right)^2d\theta.
	\end{equation*}\\

	\textit{Case 3: External not Intersecting.} We are  considering the case when $\alpha+\theta_0\leq \pi/2$, the segment  $\overline{OB}$ has length strictly greater than $1$ and the side of the polygon  $\overline{AB}$ is external to the unitary ball.
\begin{figure}[htpb]
	\centering
	\includegraphics[scale=0.90]{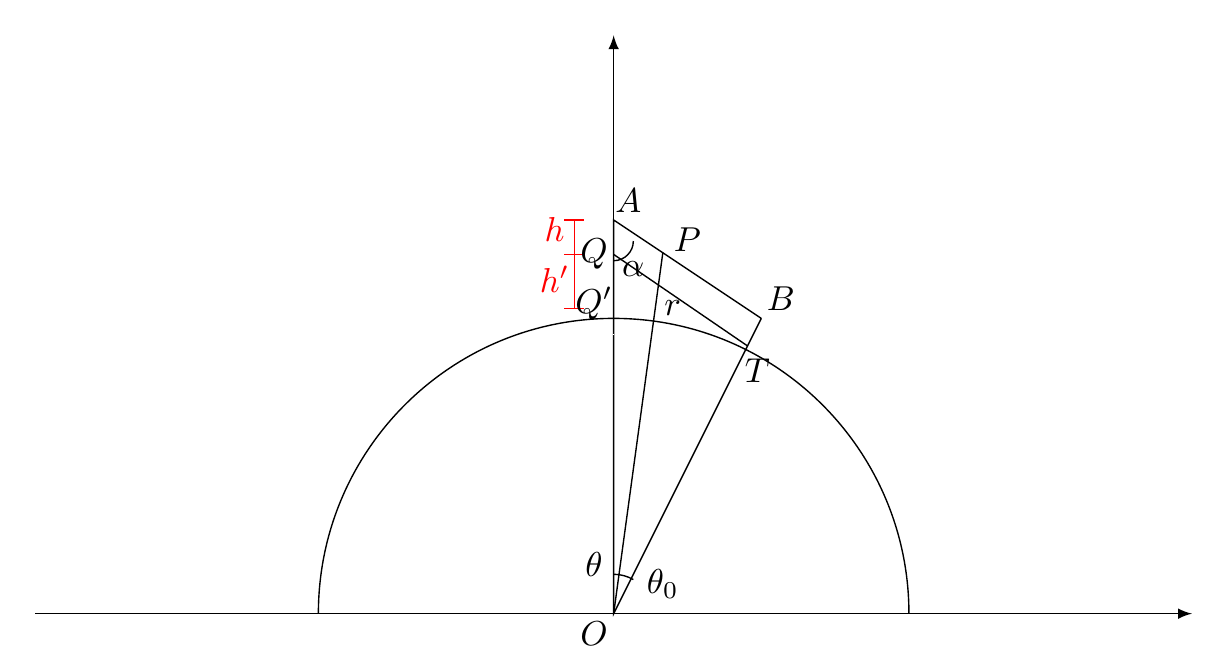}
	\caption{External not intersecting case}
	\label{fig1}
\end{figure}
	We set $z=\pi/2-\alpha$ and we consider the line  $\mathit{r}$ intersecting  the segment $\overline{OA}$,  that is parallel to the line passing through the points $A$ and $B$ and that touches the ball. We call $h$ the length of the segment $\overline{AQ}$, where $Q$ is the point of intersection between the segment $\overline{AO}$ and the line $\mathit{r}$ and we call $h'$ the length of the segment $\overline{QQ'}$, where $Q'$ is the the point of intersection between the segment $\overline{AO}$ and the ball. 	Let us first assume that $\alpha+\theta_0=\pi/2$.  We have that 
	\begin{equation}
	u(\theta)=\dfrac{\cos(\theta_0-\theta)}{\sin(\alpha)(h+h'+1)}.
	\end{equation}
So, setting $I=1/\left(\sin(\alpha)(h+h'+1)\right)$, we obtain that 
\begin{equation}
\int_{0}^{\theta_0} \left( u'(\theta)\right)^2d\theta=I^2\int_{0}^{\theta_0} \sin^2(\theta-\theta_0)\:d\theta=I^2\left( \dfrac{\theta_0}{2}-\dfrac{\sin(2\theta_0)}{4}\right)\approx I^2\frac{4}{3}\theta_0^3.
\end{equation}
	Let us denote by $A_T$ the area of the trapeze $ABTQ$ and let us compute $A_T$:
	\begin{equation}
A_T=\dfrac{\sin(\theta_0)(1+2h+h')}{2} h'\cos(\theta_0)\approx\frac{h'}{2}\sin(2\theta_0)\approx \frac{h'}{2}(2\theta_0-\frac{4\theta^3_0}{3}).
	\end{equation}
	So, since 
	\begin{equation*}
		\ds\int_{0}^{\theta_0}(1-u(\theta)) \;d\theta\geq A_T,
	\end{equation*}
	we can conclude that there exists a costant $C>0$ such that
	\begin{equation*}
		\ds\int_{0}^{\theta_0}|1-u(\theta)| \;d\theta\geq C\int_{0}^{\theta_0} \left( u'(\theta)\right)^2d\theta.
		\end{equation*}\\
		Now let us assume that $\alpha+\theta_0>\pi/2$ and let us set $z=\pi/2-\alpha$.
		 We have that 
		 \begin{equation}
		 	u(\theta)=\dfrac{\sin(\theta+\alpha)}{\sin(\alpha)(h+h'+1)}
		 \end{equation}
		and that 
		\begin{align*}
	&\dfrac{1}{I^2}	\int_{0}^{\theta_0} \left( u'(\theta)\right)^2d\theta=\int_{0}^{\theta_0} \sin^2(\theta+\alpha)\:d\theta=\dfrac{\theta_0}{2}-\dfrac{\sin(2\theta_0+2\alpha)}{4}+\dfrac{\sin(2\alpha)}{4}\\&= \dfrac{\theta_0}{2}+\dfrac{\sin(2z)}{4}\left(  1-\cos(2\theta_0)\right)+\dfrac{cos(2z)\sin(2\theta_0)}{4}\approx\theta_0+\theta_0^2z-z^2\theta_0-\frac{2}{3}\theta_0^3.
			\end{align*}
	If we compute $A_T$:
	\begin{equation*}
A_T=\frac{\sin(\theta_0)}{2}[\sin^2(\alpha)\sin^2(\beta)+\sin^3(\alpha)\sin(\beta)(1+h+h')]
	\end{equation*}
		and we can conclude.\\
		
	%	In this case we prove that 
		%\begin{equation*}
		%\ds\int_{0}^{\theta_0}|1-u(\theta)| \;d\theta\geq C\int_{0}^{\theta_0} \left(\left(u'(\theta)\right)^2  +u^2(\theta)-u^4(\theta) \right)d\theta.
		%end{equation*}

		%%We have that 
		%\begin{align*}	
		%&\dfrac{1}{A^4} \int_{0}^{\theta_0} \left(\left(u'(\theta)\right)^2  +u^2(\theta)-u^4(\theta)\int_{0}^{\theta_0} \right)d\theta=\\ &\dfrac{1}{A^4} \sin^2(\alpha)\cos^2(\theta+\alpha)+\sin^2(\alpha)\sin^2(\alpha+\theta)-\sin^4(\alpha+\theta)\:d\theta\approx \dfrac{\theta_0^3}{3}+z^2\theta_0-z\theta_0^2+.
		%\end{align*}

			\textit{Case $3$: Internal not Touching. }
		Let us assume that the segments $\overline{OA}$ and $\overline{OB}$ are both less than $1$ and we call $h:=1-|\overline{OA}|$ and $\beta$ the angle $O\hat{B}A$
		
	\begin{figure}[htpb]
	\centering
	\includegraphics[scale=0.90]{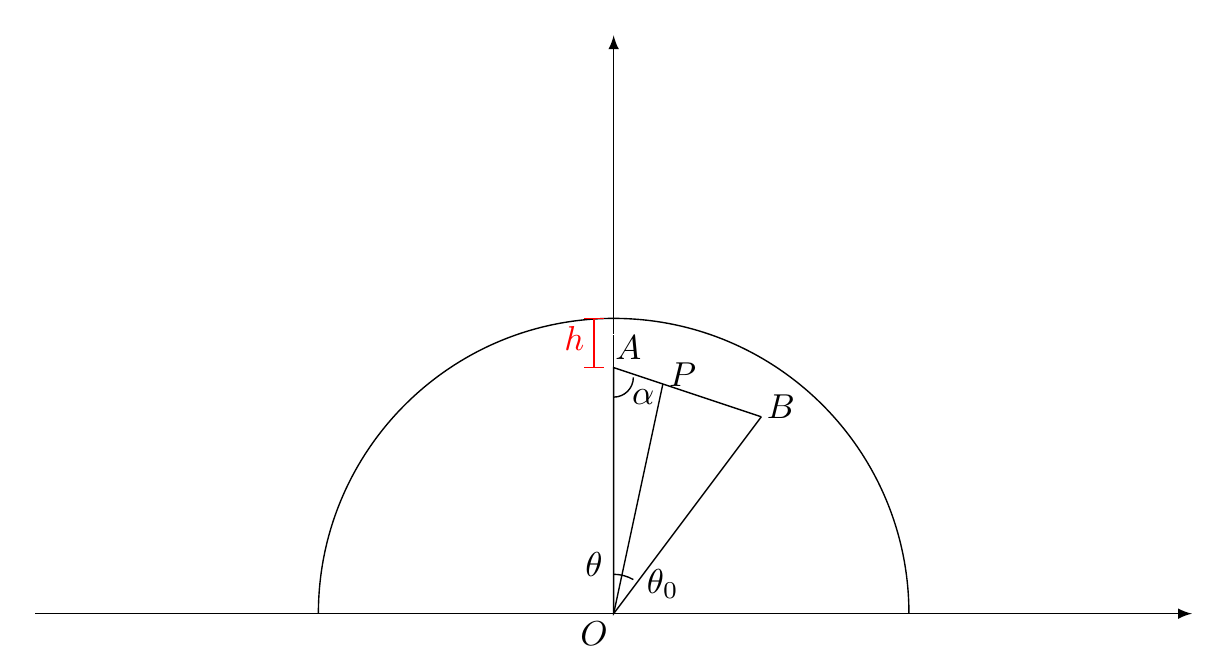}
	\caption{Internal not intersecting  case}
	\label{fig1}
\end{figure}
			
			In this case we have that
			\begin{equation*}
				u(\theta)=\dfrac{\sin(\alpha+\theta)}{(1-h)\sin(\alpha)}.
			\end{equation*}
			Setting $I=1/(1-h)\sin(\alpha)$ and $z=\pi/2-\alpha$,
			\begin{equation*}
			 \dfrac{1}{I^2}	\int_{0}^{\theta_0} \left( u'(\theta)\right)^2d\theta=\dfrac{\theta_0^2}{2}+\dfrac{\sin(2z-2\theta_0)}{4}-\dfrac{\sin(2z)}{4}\approx\dfrac{\theta^3_0}{3}-z\theta_0^2+z^2\theta_0.
			\end{equation*}
			 Let $Q$ be the point that lays in the circumference and  in the semi-line that contains  the segment $\overline{OA}$ and $Q'$ a point that lays in the semi-line containing $\overline{OB}$, such that the segment $\overline{QQ'}$ is  parallel to $\overline{AB}$.
		%	We consider now the trapeze $ABQQ'$, where $\overline{QQ'}$ is the segment parallel to $\overline{AB}$, passing through the point  where the circumference intersects the radius that is the extension of the segment $\overline{OA}$.
		Calling $A_T$ the area of the trapeze $ABQQ'$, we have
			\begin{equation*}
		A_T=\dfrac{h(2-h)\sin(\alpha)}{2\sin(\beta)}\sin(\theta_0).
			\end{equation*}
			We can conclude, since
		\begin{equation*}
		\ds\int_{0}^{\theta_0}|1-u(\theta)| \;d\theta\geq A_T.
		\end{equation*}

		\textit{Case $4$: Internal Touching. }
		Let us assume that the side $\overline{AB}$ is now contained in the ball and that the segment $\overline{OB}$ has length less than $1$.
			\begin{figure}[htpb]
	\centering
	\includegraphics[scale=0.90]{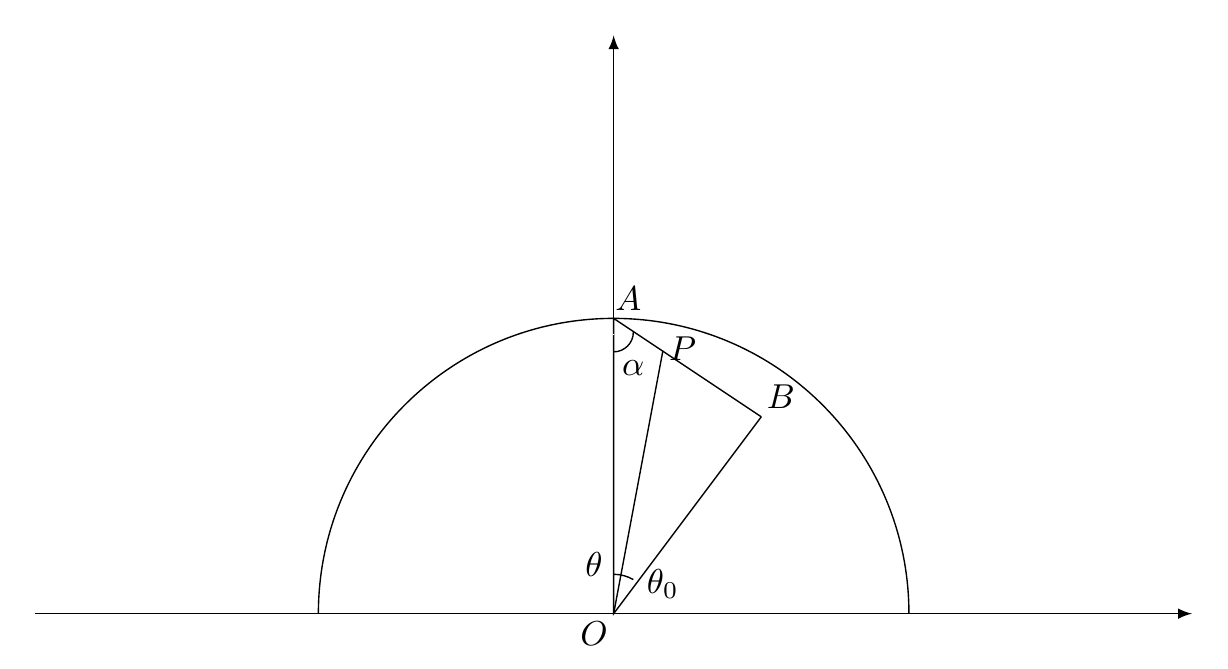}
	\caption{Internal secant case}
	\label{fig1}
\end{figure}
	In this case, setting $I=1/\sin(\alpha)$ and $z=\pi/2-\alpha$, we have, when $z\to 0$ that 
			\begin{align*}
			& \dfrac{1}{I^2}	\int_{0}^{\theta_0} \left( u'(\theta)\right)^2d\theta=\int_{0}^{\theta_0} \cos^2(\theta+\alpha)\:d\theta=\frac{\theta_0}{2}+\dfrac{\sin(2z-2\theta_0)}{4}-\dfrac{\sin(2z)}{4}\\&
			\approx \frac{\theta_0^3}{3}+z^2\theta_0-z\theta_0^2
			\end{align*}
			and 
			\begin{align*}
		& \dfrac{1}{I}\ds\int_{0}^{\theta_0}(u(\theta)-1)\;d\theta=-\sin(z-\theta_0)+\sin(z)-\theta_0\cos(z)\approx\dfrac{z\theta_0^2}{2}-\dfrac{\theta_0^3}{6},
			\end{align*}
			and we can conclude, since we can show after some computations of Euclidean geometry that $z$ goes to $0$ with the same speed as $\theta_0$.
	If we are in the case that $z$ does not tend to $0$, as $\theta_0\to0$, we Taylor expand in $\theta_0$ and we easily  obtain the desired result. 	
\end{proof}

\subsection{Proof of Proposition  \ref{first_step} in the higher dimensional case}
We can  assume that the boundary of  $\Omega\in\mathcal{C}_n$ can be parametrized as  in \eqref{boundary} and we can argue as in Section \ref{planny} (using the results contained in \cite{gelli}): for every $\varepsilon>0$, there exists  a  positive constant $\delta$,  depending only on the dimension $n $ such that, if $\Delta P(\Omega)\leq\delta$ , then $||1-r||_{L^\infty(\mathbb{S}^{n-1})}\leq \varepsilon$. 
Now, from  \eqref{asi_convex},  we have that 
\begin{equation}\label{fuscoo}
\mathcal{A}_F(\Omega)\leq \frac{n+1}{n}\int_{\mathbb{S}^{n-1}}|r(x)-1|\;d\mathcal{H}^{n-1}.
\end{equation}
So, by \eqref{fuscoo} and \eqref{peri_convex},  proving Proposition \ref{first_step}, is equivalent to prove that there exists $C>0$ such that  
 \begin{equation}\label{final_3D}
\int_{\mathbb{S}^{n-1}}|r(x)-1|\;d\mathcal{H}^{n-1}\geq C\int_{\mathbb{S}^{n-1}} \left[   |\nabla_{\tau} r|^2+r^2-\dfrac{1}{r^{2(n-2)}} \right] \;d\mathcal{H}^{n-1}.
 \end{equation}
  We note now that% , denoting by $\alpha:=n \omega_n^{1/n}$ and by $\beta:=\alpha^{1/(2n-2)}$, we have 
  \begin{equation*}
     r^2-\dfrac{1}{r^{2(n-2)}}=\dfrac{\left(r-1\right)}{r^{2(n-2)}}  \left(1+r+\cdots r^{2n-3}\right).
      \end{equation*}
     and  the term 
 $$\dfrac{1}{r^{2(n-2)}}\left(1+r+\cdots r^{2n-3}\right) $$ 
is bounded if  $||1-r||_{L^\infty(\mathbb{S}^{n-1})}\leq \varepsilon$,  so that it can absorbed by the constant $C$. For this reason, it remains to prove
\begin{equation*}
    \int_{\mathbb{S}^{n-1}}|r(x)-1|\;d\mathcal{H}^{n-1}\geq C\int_{\mathbb{S}^{n-1}} |\nabla_{\tau} r|^2  \;d\mathcal{H}^{n-1},
\end{equation*}
that is the claim of the following proposition.
 
 \begin{lem}  \label{nd} Let $n>2$. There exist  $C>0$, 
 	depending only on $n$,  such that, for every $\Omega\in\mathcal{C}_n$ with $\Delta P (\Omega)<\delta$, 
 then, if $r$ is the function that describes the boundary of $\Omega$ as defined in \eqref{boundary}, it holds
 \begin{equation}\label{final_ND}
\int_{\mathbb{S}^{n-1}}|r(x)-1|\;d\mathcal{H}^{n-1}(x)\geq C\int_{\mathbb{S}^{n-1}} |\nabla_{\tau} r(x)|^2  \;d\mathcal{H}^{n-1}(x).
 \end{equation}
\end{lem}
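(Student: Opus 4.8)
The goal is to prove a Poincaré-type inequality on the sphere $\mathbb{S}^{n-1}$ in which the $L^1$-norm of $r-1$ controls the $L^2$-norm of the tangential gradient $\nabla_\tau r$, under the smallness assumption $\|1-r\|_{L^\infty(\mathbb{S}^{n-1})}\le\varepsilon$. The plan is to run the same localized, side-by-side comparison argument used in the planar case, but now organized by a covering of $\mathbb{S}^{n-1}$ adapted to the supporting hyperplanes of $\partial\Omega$. By density of convex polytopes in the Hausdorff metric (Theorem $1.8.16$ in \cite{S}) together with the convergence of the associated radial functions (as in the use of \cite{Chen} in the planar proof), it suffices to prove \eqref{final_ND} with a uniform constant $C$ for convex polytopes $\mathcal{P}\in\mathcal{C}_n$ with $d_{\mathcal{H}}(\mathcal{P},B)$ small. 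For a polytope, $\partial\mathcal{P}$ consists of finitely many facets, and the restriction of $r$ to the radial projection of a single facet onto $\mathbb{S}^{n-1}$ is, up to the positive bounded factor $r^{n-2}$-type corrections already absorbed, the reciprocal of a linear (affine) function of the direction. Thus the estimate localizes to the following model claim: if $H$ is a hyperplane and $F\subset H$ a facet whose radial image is a spherical cell $U\subset\mathbb{S}^{n-1}$ meeting a small neighborhood of the ``north pole'' region, then
\begin{equation*}
\int_U |r(x)-1|\,d\mathcal{H}^{n-1}(x)\ \geq\ C\int_U |\nabla_\tau r(x)|^2\,d\mathcal{H}^{n-1}(x),
\end{equation*}
with $C$ independent of the facet, and then one sums over the (finitely many) facets.

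\textbf{Key steps.} First I would record the explicit form of $r$ over a single facet: writing the affine hyperplane as $\{y\cdot e = d\}$ with unit normal $e$ and support value $d=h_{\mathcal{P}}(e)$, the radial function there is $r(x)=d/(x\cdot e)$, so that $\nabla_\tau r(x) = -\,d\,(x\cdot e)^{-2}\,\nabla_\tau(x\cdot e)$ and one has the pointwise identity $|\nabla_\tau r| \le C\,d\,|\nabla_\tau(x\cdot e)|$ on $U$, using $\varepsilon$-closeness to control $(x\cdot e)^{-2}$. Second, I would distinguish the geometric cases exactly as in the planar argument — facet externally tangent to $B$, externally secant, external non-intersecting, internal touching, internal non-touching — the key point in each being that the sign of $r-1$ is essentially constant on $U$ (or splits $U$ into a controlled number of pieces on which it is constant), so $\int_U|r-1| = |\int_U (r-1)|$ up to a factor, and this integral is comparable to the ``deficit volume'' between the facet and the nearest tangent hyperplane. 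Third — the heart of the matter — I would establish the scaling balance: over a facet of angular size $\sim\theta_0$ tilted away from the tangent configuration by a parameter $z$, both sides are computed by Taylor expansion, giving $\int_U|\nabla_\tau r|^2 \sim \theta_0^{\,n+1}(\text{poly in }z,\theta_0)$ and $\int_U|r-1|\sim\theta_0^{\,n+1}(\text{poly in }z,\theta_0)$ with the numerator dominating provided the facet is not too skew; the non-skew condition (e.g.\ $z\gtrsim \theta_0$ in the secant case, $z$ and $\theta_0$ comparable in the internal case) is automatically enforced for a polytope that is $\varepsilon$-Hausdorff-close to the ball, since then no supporting hyperplane can be wildly tilted relative to its touching direction. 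Finally, I would sum the facet estimates, noting that the constant $C$ from each case is bounded below by a dimensional constant, and invoke the density/convergence argument to pass from polytopes to general $\Omega\in\mathcal{C}_n$.

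\textbf{Main obstacle.} The delicate point is uniformity of the constant $C$ across all facets and all polytopes simultaneously. In the plane one has only one angular parameter per edge and the case analysis is finite and explicit; in dimension $n\ge 3$ a facet is an $(n-1)$-dimensional polytope and its radial image $U\subset\mathbb{S}^{n-1}$ can be a thin sliver of irregular shape, so the clean one-parameter Taylor expansions of the planar proof must be replaced by estimates that are uniform in the shape of $U$. The way around this is to observe that the quantities $\int_U|r-1|$ and $\int_U|\nabla_\tau r|^2$ depend on the facet only through the affine data $(e,d)$ and the planar domain $F$, and that both are $1$-homogeneous, resp.\ controllable, in a way that reduces to a \emph{single} scalar comparison: on the half-space side, $r-1$ is, to leading order, a nonnegative (or nonpositive) affine-plus-quadratic function of the distance to the touching point, while $|\nabla_\tau r|^2$ is, to leading order, the square of that affine part; hence the inequality is the elementary fact that $\int_U (\text{dist})\gtrsim \int_U(\text{dist})^2$ on a set of small diameter, with the diameter bound coming from $\varepsilon$-closeness. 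Making this reduction rigorous — in particular handling facets for which $r-1$ changes sign on $U$ by splitting $U$ along the great subsphere $\{r=1\}$ and controlling the number and geometry of the pieces — is where the real work lies; everything else is a routine adaptation of Section \ref{planny}.
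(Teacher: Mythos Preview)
Your strategy is \emph{different} from the paper's, and the obstacle you flag is real. The paper does not redo the facet-by-facet case analysis in dimension $n$; instead it \emph{slices} and reduces directly to the planar lemma already proved. Concretely: in a neighborhood $\omega$ of a point $x_0\in\mathbb{S}^{n-1}$ the paper picks an orthonormal frame $(e_1,\dots,e_n)$ with $x_0=e_n$, and for each $i=1,\dots,n-1$ lets $\nabla_i r(x)$ denote the derivative of $r$ along the great circle $C_i=\mathbb{S}^{n-1}\cap\mathrm{span}(x,e_i)$. Near $x_0$ the tangent directions of the $C_i$ are almost orthogonal, so pointwise $|\nabla_\tau r|^2\le c\sum_{i=1}^{n-1}|\nabla_i r|^2$. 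For a fixed $i$ one then writes the integral over $\omega$ in spherical coordinates adapted to $e_i$, applies the already-proved \emph{planar} inequality \eqref{ineq} on each one-dimensional great-circle arc (the slice of $\Omega$ by a $2$-plane through the origin is again convex and close to the unit disk), multiplies by the Jacobian, and integrates over the remaining $(n-2)$ angular parameters (Fubini). This gives $\int_\omega|r-1|\ge C\int_\omega|\nabla_i r|^2$ for each $i$; summing over $i$ and using the pointwise bound yields \eqref{final_ND} on $\omega$, and a finite cover of $\mathbb{S}^{n-1}$ finishes.

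This slicing argument completely sidesteps the uniformity-in-facet-shape problem you identify: the planar constant $C$ is already uniform, and Fubini does the rest. Your route---carrying the full case analysis up to $(n-1)$-dimensional facets---could in principle be made to work, and your reduction to ``$\int_U\mathrm{dist}\gtrsim\int_U\mathrm{dist}^2$ on a set of small diameter'' is the right heuristic for the external cases. But in the internal/secant cases, where $r-1$ changes sign on $U$, the comparison is between $\int_U|r-1|$ and $\int_U(r-d)$ with $d<1$, and these are not obviously comparable uniformly in the shape of $U$ (thin slivers straddling the sphere $\{r=1\}$ are the dangerous configurations); you would need a genuine substitute for the one-parameter Taylor expansions of the planar proof. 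The paper's slicing trick is both shorter and avoids this difficulty altogether.
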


\begin{proof}
We fix a system of coordinates $\left(O,e_1,\cdots, e_n\right)$ and we  prove the  desired inequality in a neighbourhood $\omega$ of the point $x_0:=(0,\cdots,0,1)$.\\
Let $x\in\omega$, there exists $c>0$ such that 
\begin{equation}\label{tang}
    |\nabla_\tau r(x)|^2\leq c \sum_{i=1}^{n-1} |\nabla_i r(x)|^2,
\end{equation}
where $\nabla_i u(x)$ is the tangential gradient of $u$ in the circle $C_i$, that is the circle given by the intersection of $\mathbb{S}^{n-1}$ and the plane that contains the origin $O$ and the vectors $x$ and $e_i$, for every $i=1,\dots,n-1$. Inequality \eqref{tang} is an easy consequence of the fact that the tangent $\tau_i$ at $C_i$ in $x$ are almost orthogonal.  
We prove now the following inequality: there exists a constant $C>0$ such that 
\begin{equation}\label{n-1}
    \int_{\omega}|r(x)-1|\;d\mathcal{H}^{n-1}(x)\geq C\int_{\omega} |\nabla_{n-1} r|^2  \;d\mathcal{H}^{n-1}(x).
\end{equation}
In order to do that, we consider the spherical coordinates of $\mathbb{S}^{n-1}$:
\begin{equation*}
\begin{cases}
x_1=\cos(\theta_1)\\
x_2=sin(\theta_1)\cos(\theta_2)\\
\vdots\\
x_{n-1}=\sin(\theta_1)\dots\sin(\theta_{n-2}) \cos(\theta_{n-1})\\
x_n=\sin(\theta_1)\dots\sin(\theta_{n-2})\sin(\theta_{n-1})
\end{cases}
\end{equation*}
with $\theta_1\dots,\theta_{n-2}\in[0,\pi]$ and $\theta_{n-1}\in[0,2\pi)$. 
In this coordinate system the point $x_0=(0,\cdots,0,1)$ corresponds to 
\begin{align*}
    &\theta_1=\pi/2,\\
   & \theta_2=\pi/2,\\
    &\vdots\\
    &\theta_{n-1}=\pi/2.
    \end{align*}
We fix now $\theta_1,\dots, \theta_{n-2}$ and we let $\theta_{n-1}$ vary in a small interval $[\pi/2-\delta_1, \pi/2+\delta_1]$. 
We write on the the piece of circle $\omega'$ described by this parametrization the bidimensional inequality that we have proved in the previous Section \ref{planny}
\begin{equation}\label{ind}
    \int_{\omega'}|r(x)-1|\;d\mathcal{H}^1
\geq C \int_{\omega'}|\nabla_{n-1}r|^2\;d\mathcal{H}^1.
\end{equation}
Now we multiply both sides of \eqref{ind} by the corresponding Jacobian $$\left( \sin^{n-2}(\theta_1)\sin^{n-3}(\theta_2)\dots \sin(\theta_{n-2})\right)$$ and we integrate in $\theta_1,\theta_2,\dots,\theta_{n-2}$ in a small neighbourhood, proving in this way inequality \eqref{n-1}.
In a similar way, with a suitable choice of spherical coordinates, we prove that for  $i=1,\dots, n-2$, also holds
\begin{equation}\label{i}
       \int_{\omega}|r(x)-1|\;d\mathcal{H}^{n-1}(x)\geq C\int_{\omega} |\nabla_{i} r|^2  \;d\mathcal{H}^{n-1}(x).
\end{equation}
Summing and using \eqref{tang},  we can conclude that
 \begin{equation}\label{final_ND}
\int_{\omega}|r(x)-1|\;d\mathcal{H}^{n-1}(x)\geq C\int_{\omega} |\nabla_{\tau} r|^2  \;d\mathcal{H}^{n-1}(x).
 \end{equation}

\end{proof}

\section{Proof of the  Theorem \ref{main_thm}} 

\begin{proof}
	
% Step $1$: Large Range

From \eqref{brasco_thm}, it follows that there exists a constant $K_n>0$ such that 
\begin{equation*}
\Delta \lambda_1(\Omega)\geq K \left(  P(\Omega)  \right)^2-\lambda_1(B).
\end{equation*}
Let us define the function $f_1:\mathbb{R}\to \mathbb{R}$ as
\begin{equation*}
f_1(x)=\dfrac{Kx^2-\lambda_1(B)}{K x^2}.
\end{equation*}
Since $$ \lim_{x\to +\infty}f_1(x)=1 ,$$ 
we have that there exists $M_1>0$  such that, if $x>M_1+P(B)$, then 
 $f_1(x)\geq 1/2$. This is equivalent to say that, if $\Delta P (\Omega)>M_1$, then 
 \begin{equation*}
 	\Delta \lambda_1(\Omega)\geq \dfrac{K}{2} P(\Omega)^2=\dfrac{K}{2}\left( \dfrac{\Delta P (\Omega)+P(B)}{\Delta P(\Omega)}	\right)^2 \Delta P(\Omega)^2.
 \end{equation*} 
If we define now 
\begin{equation*}
f_2(x)=\dfrac{x+P(B)}{x},
\end{equation*}
being $\lim_{x\to +\infty} f_2(x)=1$, we have that there exists $M_2>M_1$ such that, if  $x>M_2$ then $f_2(x)\geq 1/2$. This means that, if  $\Delta P(\Omega)>M_2$ then 
\begin{equation}\label{firstt}
\Delta \lambda_1(\Omega)\geq  \dfrac{K}{8} \Delta P(\Omega)^2.
\end{equation}

Let us assume now that we are in the case $\Delta P(\Omega)< \delta$. We can combine the sharp quantitative Faber-Krahn inequality \eqref{quantitative_v} with Proposition \ref{first_step} and we obtain 
\begin{equation}\label{first_final}
\Delta \lambda_1(\Omega)\geq \bar{C} \mathcal{A}^2_F(\Omega) \geq C^2\;\bar{C} \Delta P(\Omega)^2.
\end{equation}
%On the other hand, if   $\mathcal{A}_H(\Omega)>\delta_0$,  combining the non-sharp quantitative result  \eqref{Melas_Haus} and the fact that we are in the case $\Delta P(\Omega)<\delta_1$, we have 
%\begin{equation}\label{second}
%\dfrac{\Delta\lambda_1(\Omega)}{\left(\Delta P(\Omega)\right)^2}\geq \dfrac{M \mathcal{A}_H(\Omega)^{2n^2}}{\delta^2_1}\geq \dfrac{M \delta_0^{2 n^2}}{\delta_1^2}.
%\end{equation}

It remains to study the intermediate case $\delta<\Delta P(\Omega)\leq M_2$. 
 %If  $\mathcal{A}_H(\Omega)>\delta_0$, then we procede as before and, using \eqref{Melas_Haus}, we obtain 
%\begin{equation}\label{third}
%\dfrac{\Delta\lambda_1(\Omega)}{\left(\Delta P(\Omega)\right)^2}\geq \dfrac{M \mathcal{A}_H(\Omega)^{2n^2}}{M^2_2}\geq \dfrac{M \delta_0^{2 n^2}}{M_2^2}. 
%\end{equation}
%Finally, we consider the case  $\mathcal{A}_H(\Omega)< \delta_0$.  
Combining \eqref{Melas_Haus} and \eqref{per_Haus}, we obtain
\begin{equation}\label{fourth}
\dfrac{\Delta\lambda_1(\Omega)}{\left(\Delta P(\Omega)\right)^2}\geq \dfrac{M \mathcal{A}_H(\Omega)^{2n^2}}{M^2_2}\geq \dfrac{ M K\left(\Delta P(\Omega) P(\Omega)^{(2-n)/(n-1)} \right)^{2n^2}}{M_2^2}\geq \dfrac{\alpha}{M_2^2},
\end{equation}
where we have assumed, without loss of generality, that the ball centered at the origin is the one that  achieves the minimum in \eqref{hausdorf_def} and 
\begin{equation*}
\alpha:=MK\left(\left(M_2+P(B)\right)^{(2-n)/(n-1)} \delta  \right)^{2n^2}.
\end{equation*}

We conclude the proof of Theorem \ref{main_thm}, choosing the constant $c=c(n)>0$ by taking the minimum between the constant in \eqref{firstt}, \eqref{first_final} and \eqref{fourth}:
\begin{equation}\label{finito}
	c_n= \min\left\{\dfrac{K}{8}, \; C^2\bar{C} ,\;\dfrac{\alpha}{M_2^2}\right\}.
\end{equation}
\end{proof}

We conclude with the following Remark.

%\section{Conclusions and final remarks}

\begin{rem} Proceeding in this way, by  using the sharp inequality proved in \cite{BDPV}, we are not able to prove the  conjectured inequality  \eqref{ilias_eq}:
	our leading idea is  indeed to combine th quantitative Faber-Krahn   inequality \eqref{quantitative_v} with an inequality of the form
	\begin{equation}\label{target}
	\mathcal{A}_{\mathcal{F}}(\Omega)\geq C\left( P(\Omega)-P(B)\right)^\delta,
	\end{equation}
	$\delta>0. $  In this case,  the target power in \eqref{target} to prove  \eqref{ilias_eq} would be  $3/4$, but, unfortunately, the best power is $\delta=1$ and the  "bad" sets are  in this case  the polygons, as pointed out in Remark \ref{sharp}.
\end{rem}

\section*{Acknowledgments}
The author would like to thank Professor Dorin Bucur from Universit{\'e}  Savoie Mont Blanc  for proposing the problem and for hosting the author in his department LAMA (Laboratoire de Math{\'e}matiques) for a research period during which they discussed about the problem. Moreover, the author thanks Professor Paolo Salani from University of Florence for the useful and  insightful discussions during the period of his visit at LAMA. 

\small{

}


\begin{thebibliography}{999}
\bibitem{AF} P. Antunes, P. Freitas, \textit{New bounds for the principal Dirichlet eigenvalue of planar regions}, Experimental Mathematics,
15 (3), 333-342 (2006).
\bibitem{B} G. Bellettini, \emph{Lecture Notes on Mean Curvature Flow: Barriers and Singular Perturbations}. Lecture Notes Scuola Normale Superiore (2013).
\bibitem{brasco} L. Brasco, \emph{On principal frequencies and inradius in convex sets}, Bruno Pini Math. Anal. Semin., 9,   78-101 (2018).

\bibitem{brasco_survey} L. Brasco, G. De Philippis, \textit{ Spectral inequalities in quantitative form}Shape optimization and spectral theory, 201?281, De Gruyter Open, Warsaw, (2017). 


\bibitem{BDR} L. Brasco, G. De Philippis, B. Rufini, \textit{Spectral optimization for the Stekloff--Laplacian: the stability issue}, 
Journal of Functional Analysis, 262, 4675-4710 (2012).

\bibitem{BDPV}L. Brasco, G. De Philippis, B. Velichkov, \textit{Faber-Krahn inequalities in sharp quantitative form}, Duke Math. J., Volume 164, Number 9, 1777-1831(2015).


\bibitem{Chen}Z. Chen, \textit{
	Z. Chen, The Lp Minkowski problem for q-capacity} Proceedings of the Royal Society of Edinburgh Section A: Mathematics (to appear): 1-31.



%\bibitem{BM} L. Brasco, R. Magnanini, \textit{The Heart of a Convex Body},  Geometric properties for parabolic and elliptic PDE's,
%49-66, Springer INdAM Ser., 2, Springer, Milan, 2013.
\bibitem{EFT} L. Esposito, N. Fusco, C. Trombetti, \emph{A quantitative version of the isoperimetric inequality: the anisotropic case}, Ann. Sc. Norm. Super. Pisa Cl. Sci. 4, 619-651 (2005).

	\bibitem{EG} L.C.  Evans, R.F. Gariepy, Measure theory and fine properties of functions, CRC Press, Inc., Boca Raton, Florida, (1992).
	
	
	\bibitem{Faber} G. Faber, \textit{Beweis, dass unter allen homogenen Membranen von gleicher Fläche und gleicherSpannung die kreisförmige den tiefsten Grundton gibt.Verlagd}, Verlagd. Bayer. Akad. d. Wiss., 8,(1923).
	
	\bibitem{FL} I. Ftouhi, J. Lamboley,  \textit{Blaschke-Santal\'o diagram for volume, perimeter and first Dirichlet eigenvalue}, SIAM Journal on Mathematical Analysis, Society for Industrial and Applied Mathematics, (2020).
	
\bibitem{F} N. Fusco, \textit{The stability of the isoperimetric inequality for convex or nearly spherical domains in $\R^n$}. Lecture Notes in Math., Springer, 2179,  73-123 (2017).


\bibitem{gelli} N. Fusco, M. S. Gelli, G. Pisante, \textit{On a Bonnesen type inequality involving the spherical deviation}, J. Math. Pures Appl. (9) 98, no. 6, 616-632 (2012).



\bibitem{FMP} N. Fusco, F. Maggi, A. Pratelli, \textit{The sharp quantitative isoperimetric inequality}, Annals of Mathematics, 168, 941-980 (2008).


\bibitem{FMP1} N. Fusco, F. Maggi, A. Pratelli, \textit{Stability estimates for certain Faber-Krahn, Isocapacitary and Cheeger inequalities}, Ann. Sc. Norm. Super. Pisa Cl. Sci., 8, 51-71 (2009).

\bibitem{GS1}P. Grinfeld, G. Strang, \textit{The Laplace eigenvalue of a  polygon}, Computers \& Mathematics with Applications, Elsevier, 48, 1121-1133, (2004).

\bibitem{GS}P. Grinfeld, G. Strang, \textit{Laplace eigenvalues on regular polygons: a series in $1/N$}, Journal of Mathematical Analysis and Applications 385(1):135-149, (2012).



\bibitem{LNP} J. Lamboley, A. Novrouzi, M. Pierre, \textit{Regularity and Singularities of Optimal Convex Shape in the Plane}, Arch. Ration. Mech. Anal. 205, no. 1, 311-343 (2012).




\bibitem{K} S. Kesevan, \textit{Symmetrization  and applications},  Series in Analysis, 3. Word Scientific Pubblishing Co. Pte. Ltd., Hackensack, NJ, (2006).

\bibitem{Krahn}E. Krahn., \textit{Über eine von Rayleigh formulierte Minimaleigenschaft des Kreises}, Math. Ann.,94(1):97?100, (1925).




\bibitem{H} A. Henrot, \textit{Extremum problems for eigenvalues of elliptic operators}, Frontiers in mathematics. Birkhauser Verlag, Basel, (2006).

\bibitem{Ma} F. Maggi, Sets of Finite Perimeter and Geometric Variational Problems: An Introduction to Geometric Measure Theory, Cambridge Studies in Advanced Mathematics (2012).

	\bibitem{Me} A. Melas, \textit{The stability of some eigenvalue estimates}, J. Differential Geom. 36, no. 1, 19?33 (1992).

\bibitem{M} L. Molinari, \textit{On the ground state of regular polygonal billiards}, Journal of Physics A: Mathematical and General, 30(18):6517, (1997).

\bibitem{N} C. Nitsch, \textit{An isoperimetric result for the fundamental frequency via domain derivative}, Calc. Var. Partial Differential Equations 49, no. 1-2, 323-335(2014). 

\bibitem{PS} G. P\'olya,  G. Szeg\"{o}, \textit{Isoperimetric Inequalities in Mathematical Physics. Annals of Mathematics Studies, no. 27}, Princeton University Press, Princeton, N. J. (1951).

\bibitem{S}R.  Schneider, \textit{Convex bodies: the Brunn-Minkowski theory. Second expanded edition}, Encyclopedia of Mathematics and its Applications,  151.Cambridge University Press, Cambridge (2014).


\end{thebibliography}
\end{document}